\newcommand\datum{December 3, 2022}
\numberwithin{equation}{section}
\newenvironment{enumeratei}{\begin{enumerate}[\quad\upshape (i)]} {\end{enumerate}}
\theoremstyle{plain}
 \newtheorem{theorem}{Theorem}[section]
 \newtheorem{lemma}[theorem]{Lemma}
 \newtheorem{proposition}[theorem]{Proposition}
 \newtheorem{observation}[theorem]{Observation}
\theoremstyle{definition}
 \newtheorem{remark}[theorem]{Remark}
 \newtheorem{example}[theorem]{Example}
\theoremstyle{remark}
 \newtheorem*{aremark}{Remark}
\DeclareMathOperator \thrSi{\mathbf{Si_3}}
\newcommand \spleq {\leq_{\textup{sp}}} 
\newcommand \vonal {\noalign{\hrule}}
\newcommand \mvar [1] {\var #1^{{{\scriptscriptstyle\pmb{{\textup{\text{\raisebox{1.5pt} m}}}}}}} }
\newcommand \varmplu{\var M^+}
\newcommand \alllat{\pmb\Lambda}
\newcommand \bhhrom{\pmb{\mathbf B}_{23}}
\newcommand \Thhrom {\var T_{23}}
\newcommand \Ker[1]{\textup{Ker}(#1)}
\DeclareMathOperator \HSP {\mathbf{HSP}}
\DeclareMathOperator \Hs{\mathbf{H}}
\DeclareMathOperator \Ss{\mathbf{S}}
\DeclareMathOperator \Ps{\mathbf{P}}
\DeclareMathOperator \HSs{\mathbf{HS}}
\DeclareMathOperator \Si{\mathbf{Si}}
\DeclareMathOperator \Aut {\textup{Aut}}
\newcommand \block[2] {#1/#2}
\newcommand \AS{\textup{AS}}
\newcommand \CS{\textup{CS}}
\newcommand \DS{\textup{DS}}
\newcommand \vx {\vec x}
\newcommand \vy {\vec y}
\newcommand \vz {\vec z}
\newcommand \var [1] {\mathcal{#1}}
\newcommand \notdiv {\mathrel{\not{\kern -0.27pt|}} }
\newcommand \Nnul {\mathbb N_0}
\newcommand \vfree[2] {\textup{FL}_{#1}(#2)}
\newcommand \ffree[1] {\textup{FL}(3)}
\newcommand \afree[2] {\textup{FL}_{#1}(#2)}
\newcommand \Ats[1] {\textup {At}({#1})}
\newcommand \Coats[1] {\textup {Ct}({#1})}
\newcommand \muk {\mu_K}
\newcommand \dual [1]  {{#1}{}^{\text{dual}}}
\newcommand \filter{\mathord{\uparrow}}
\newcommand \lideal[1]{\mathord{\downarrow}_{\kern-1pt#1}}
\newcommand \lfilter[1]{\mathord{\uparrow}_{\kern-1pt#1}}
\newcommand \restrict[2] {{#1\rceil_{#2}}}
\newcommand \tbf [1] {\textbf{#1}} 
\newcommand \set[1] {\{#1\}}
\renewcommand \phi {\varphi}
\newcommand \chain[1]{\mathsf C_{#1}}
\newcommand \red [1] {{\color{red}#1\color{black}}}
\newcommand \nothing [1] {}
\newcommand \magenta [1] {{\color{magenta}#1\color{black}}}
\begin{document}
\title%[On atoms in three-generated lattices] 
{Atoms and coatoms in three-generated lattices}

\author[G.\ Cz\'edli]{G\'abor Cz\'edli}
\address{University of Szeged, Bolyai Institute, Szeged,
Aradi v\'ertan\'uk tere 1, Hungary 6720}
\email{czedli@math.u-szeged.hu}
\urladdr{http://www.math.u-szeged.hu/~czedli/}
%\urladdr{http://www.math.u-szeged.hu/\textasciitilde{}czedli/}

%%% Thanks
\thanks{This research was supported by the National Research, Development and Innovation Fund of Hungary under funding scheme K 134851.}

\dedicatory{Dedicated to the memory of my parents, M\'aria and Gy\"orgy}

\date{\hfill {\tiny{\magenta{(\tbf{Always} check the author's website for possible updates!) }}}\  \red{\datum}}

\subjclass {06B99}
%06B (1980-now) Lattices 
%06B99 (1980-now) None of the above, but in this section  
\keywords{Three-generated lattice,  number of atoms,  many atoms, 18 atoms in a 3-generated lattice, atom spectrum of a lattice}

\begin{abstract} In addition to the  unique cover $\var M^+$ of the variety of modular lattices, we also deal with those twenty-three \emph{known} covers of  $\var M^+$ that can be extracted from the literature.  
For $\var M^+$ and for each of these twenty-three known varieties  covering  it, we determine what the pair formed by the number of atoms and that of coatoms of a three-generated lattice belonging to the variety in question can be. Furthermore, for each variety $\var W$ of lattices that is obtained by forming the join of some of the  twenty-three varieties mentioned above, that is, for $2^{23}$ possible choices of $\var W$,  we 
determine how many atoms a three-generated lattice belonging to $\var W$ can have.  The greatest number of atoms occurring in this way is only six. In order to point out that this need not be so for larger varieties, we construct a $47\,092$-element three-generated lattice that has exactly eighteen atoms.
In addition to purely lattice theoretical proofs, which constitute the majority of the paper,  some computer-assisted arguments are also presented. 
\end{abstract}

%\leftline{\magenta{The earlier preprint} \hfill{}  \texttt{https://arxiv.org/abs/2011.00343} \hfill{} \magenta{cannot be deleted!}}
%
%\leftline{\hfill
%\fbox{\green{\fbox{\magenta{\fbox{\blue{\fbox{%
%\red{\fbox{\tbf{\datum}}}
%}}}}}}}}

\maketitle
\color{black}

\section{Introduction and target}
This paper is devoted to the question that, for some varieties $\var V$ of lattices, 
 how many atoms and how many coatoms a three-generated lattice in $\var V$ can have.

\subsection{Outline}\label{subsect:outline} The paper is structured as follows. Subsections~\ref{subsect:notation} (the next subsection) gives the basic concept and notation used in the paper. Subsection~\ref{subsect:earlier} recalls all the results that have previously been known on the number of atoms in three-generated lattices; see statements \eqref{eqknXmpHla}--\eqref{eqknXmpHld}. 
Subsection~\ref{subsect:goal}, after introducing some further notation, formulates our goal; note that the  main result, Theorem~\ref{thmmain}, comes later. Section~\ref{sectionkeylemma} proves some lemmas. The (Key) Lemma~\ref{keylemma} of this section is worth separate mentioning since it could be useful in extending our results to more lattice varieties. 
Section~\ref{sect:intvl} contributes a little to our knowledge of the lattice of all lattice varieties; in particular,  Proposition~\ref{prophszHfd}\eqref{prophszHfdd} asserts that the varieties occurring in the Main Theorem  form a $2^{23}$-element boolean  interval in this lattice.
Section~\ref{sect:somespectra} determines the possible numbers of atoms and, in some cases, these numbers jointly with the numbers of coatoms for \emph{some} of the $2^{23}$ lattice varieties described in the previous section. 
Section~\ref{sectionmorabout} determines these possible numbers of atoms for  \emph{each} of the  $2^{23}$ lattice varieties and formulates the main result of the paper, Theorem~\ref{thmmain}. Finally, Section~\ref{sect:howfar} contains some additional observations on the numbers of atoms. In particular, 
Example~\ref{exmpl:18} presents a three-generated lattice with eighteen atoms; this lattice consists of $47\,092$ elements. 
Note that, as opposed to the earlier sections, Sections~\ref{sectionmorabout} and \ref{sect:howfar} include some computer-assisted arguments in addition to theoretical considerations.

\subsection{Basic notation}\label{subsect:notation} 
For an at most countable lattice $L$, let $\Ats L$ and $\Coats L$ stand for the set of atoms of $L$ and that of coatoms of $L$, respectively. The acronyms come from \emph{At}oms and \emph{C}oa\emph{t}oms. The cardinality $|\Ats L|$ is in $\Nnul:=\set{0,1,2,3,\dots}$ or it is $\aleph_0$, and the same holds for $\Coats L$.  
For a variety $\var V$ of lattices, we define three sorts of \emph{spectra} of $\var V$ as follows.
\begin{align}
\AS(\var V)&=\{|\Ats L|: L\in\var V\textup{ and }L\text{ is three-generated} \},
\label{eqlHzfQa}\\
\CS(\var V)&=\{|\Coats L|: L\in\var V\textup{ and }L\text{ is three-generated} \},\label{eqlHzfQb}\\
\DS(\var V)&=\{(|\Ats L|,|\Coats L|): L\in\var V,\textup{ }L\text{ is three-generated} \}.\label{eqlHzfQc}
\end{align}
These spectra are called the \emph{Atom Spectrum}, the \emph{Coatom Spectrum}, and the \emph{Double Spectrum} of $\var V$, respectively; the capital letters here are to explain the acronyms.

\subsection{Earlier results on the numbers of atoms}\label{subsect:earlier}
To present some examples for the concepts introduced in \eqref{eqlHzfQa}--\eqref{eqlHzfQc}, 
\begin{equation}\left.
\parbox{9cm}{let 
$\var M$, $\var D$, and $\var L$ be the variety of modular lattices, that of distributive lattices, and that of all lattices, respectively.}\,\,
\right\}
\label{pbxMDLjl}
\end{equation}
By  Cz\'edli~\cite{czgnumatoms} and duality, we know that 
\begin{align}
&\AS(\var M)=\CS(\var M)=\set{1,2,3}, \text{ so } \AS(\var D)=\CS(\var D)=\set{1,2,3}, \label{eqknXmpHla} \\
&\set{0,1,2,3,4}\subseteq  \AS(\var L) \cap \CS(\var L)\text{ but }(0,0)\notin \DS(\var L).\text{ Trivially,}\label{eqknXmpHlb}\\
&\text{if }(1,k)\text{ or }(k,1)\text{ is in }\DS(\var L),\text{ then }k\in\set{1,2}. \text{ Also,}\label{eqknXmpHlc}\\
&\DS(\var D)=\DS(\var M)=
\{(1,1),(1,2), (2,1), (2,2), (2,3), (3,2), (3,3)\}.\label{eqknXmpHld}
\end{align}
Note that \eqref{eqknXmpHld} follows from \eqref{eqknXmpHla} since each pair listed in \eqref{eqknXmpHld} is easy to represent; for example, $(2,2)\in \DS(\var D)$ and $(2,3)\in \DS(\var D)$ are  witnessed by the lattices labeled by $(2,2)$ and $(2,3)$ in  Figure~\ref{figaa}. In these two lattices, the generators are black-filled.
Since for arbitrary varieties $\var W_1$, $\var W_2$, and $\var W_3$ of lattices,
\begin{equation}\left.
\parbox{9cm}{if we have that $\var W_1\subseteq \var W_2\subseteq \var W_3$ and 
$\AS(\var W_1)=\AS(\var W_3)$, then $\AS(\var W_2)=\AS(\var W_3)$, and analogously with $\CS$ and $\DS$,}\,\,\right\}
\label{eqknXmpHle}
\end{equation}
we obtain $\AS(\var W)$,  $\CS(\var W)$,  $\DS(\var W)$ from \eqref{eqknXmpHla} and \eqref{eqknXmpHld} for every lattice variety $\var W$ between $\var D$ and $\var M$.  Note that there are continuously many such varieties $\var W$; see, for example, Hutchinson and Cz\'edli~\cite{hutchczg}.

\begin{figure}[ht]
\centerline
{\includegraphics[scale=1.0]{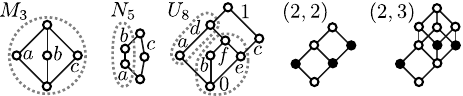}}
\caption{$M_3$, $N_5$, $U_8$, and representing two $(|\Ats L|,|\Coats L|)$ pairs}
\label{figaa}
\end{figure}

Examples \eqref{eqknXmpHla}--\eqref{eqknXmpHle} represent what has previously been known about the three spectra we have defined. However,  there are continuously many lattice varieties \emph{not} included in $\var M$ and so not belonging to the scope of  \eqref{eqknXmpHla}--\eqref{eqknXmpHle}. Hence, the examples above also show  how little has been known about the number of atoms and that of coatoms in a three-generated lattice in general.

\begin{figure}[ht]
\centerline
{\includegraphics[scale=1.0]{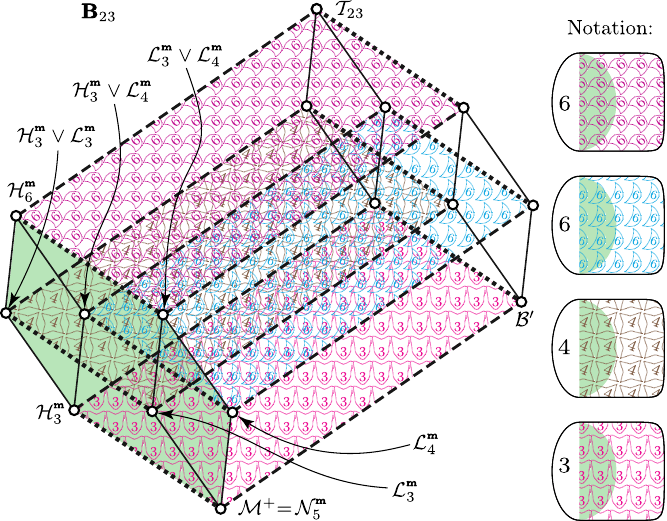}}
\caption{A 16-element $\set{0,1}$-sublattice of $\bhhrom$; the solid lines are coverings in $\bhhrom$ while the dotted lines and the dashed lines are  8-element and $2^{19}$-element (boolean) intervals in $\bhhrom$. $\bhhrom$ is partitioned into four \emph{layers}; their fill patterns are explained on the right.}
\label{figdd}
\end{figure}

\subsection{Our goal}\label{subsect:goal}
Our goal in this paper is to enrich the above-mentioned little knowledge by proving some facts about the spectra of some varieties that are slightly larger than the variety $\var M$ of modular lattices. In the lattice $\alllat$ of all lattice varieties,  $\var M$  has a unique cover $\mvar N_5=\varmplu$; it is the least variety containing $N_5$ in Figure~\ref{figaa} and all modular lattices.  (Here $\varmplu$ is the traditional notation but we also write  $\mvar N_5$, which fits better in the present paper.) Based on deep classical results, mainly Bjarni J\'onsson's results, it needs only a trivial consideration to present twenty-three lattice varieties covering $\varmplu$. These twenty-three varieties will be called the \emph{known covers} of $\varmplu$ since  $\varmplu$ may have further covers. We are going to point out in Proposition~\ref{prophszHfd}\eqref{prophszHfdd} that
\begin{equation}\left.
\parbox{9cm}{in $\alllat$, the lattice of all lattice varieties, the above-mentioned known covers generate a boolean sublattice of length 23 (and of size $2^{23}$); we denote this sublattice by $\bhhrom$.}\,\,\right\}
\label{pbxbshrBhrm}
\end{equation} 
In fact, Proposition~\ref{prophszHfd}\eqref{prophszHfdd} we will say more, namely, 
\begin{equation}
\text{$\bhhrom$ is an interval in $\alllat$.}
\label{eqtxtBnrVl}
\end{equation} 
The bottom of $\bhhrom$ is  $\varmplu=\mvar N_5$ and its atoms are the twenty-three known covers of $\varmplu$. 
\begin{equation}\left.
\parbox{9cm}{We denote the top of $\bhhrom$ by $\Thhrom$; this lattice variety is the join of the twenty-three known covers of $\varmplu=\mvar N_5$.}\,\,\right\}
\label{pbxthTlvRhrM}
\end{equation}
Although a  $2^{23}$-element lattice cannot be drawn in the practice, 
the schematic diagram given in Figure~\ref{figdd} gives some insight into it; the details will be explained in Section~\ref{sectionmorabout}.
For each of the $2^{23}$ lattice varieties belonging to $\bhhrom$, we \emph{determine the atom spectrum} of the variety in question. The description of atom spectra of members of $\bhhrom$ is even visualized by Figure~\ref{figdd}; we will later explain how.  
Since this description can be dualized in a trivial way, we are not going to  pay separate attention to coatom spectra.  
The \emph{double spectra} create so much computational difficulty that they are determined only for a quarter of the varieties belonging to  $\bhhrom$, including the twenty-three known covers of $\mvar N_5$.

It turns out that the largest number in $\Ats {\Thhrom}$ is 6, so a three-generated lattice in a variety belonging to $\bhhrom$ has at most six atoms. Even six is larger than all what previously have been known, but it is not the largest number of atoms of a three-generated lattice in this paper. Let $U_8$ denote the eight element lattice given by Figure~\ref{figaa}, and let $\var U_8:=\HSP\set{U_8}$ be the variety generated by $U_8$. As it will be pointed out, $\mvar U_8:=\var U_8\vee \var M$ is not in $\bhhrom$ but it covers one of the members of $\bhhrom$ in $\alllat$. Witnessed by a $47\,092$-element three-generated lattice belonging to $\var U_8$, we show that $18\in\AS(\var U_8)\subseteq \AS(\mvar U_8)$.

\begin{aremark}
With the exception of $\var L$, see \eqref{pbxMDLjl}, the free lattice $\vfree{\var W}3$  of $\var W$ on three generators is finite by trivial reasons in each of the lattice varieties $\var W$ occurring in the present paper.  Hence, up to isomorphism, there are only finitely many three-generated lattices in these varieties, whereby each of the three spectra is a finite set for these varieties.  Apart from straightforward consequences of the results we are going to prove here, we do not know anything about the spectra of varieties $\var W$ with $\vfree{\var W}3$ infinite.
\end{aremark}

\section{Some lemmas}\label{sectionkeylemma}
The first lemma we are going to formulate belongs to the folklore.
Its particular case for free algebras and automorphisms is mentioned in page 272 of Berman and Wolk~\cite{bermanwolk}, and a more general case with homomorphisms can also be extracted from  \cite[page 273]{bermanwolk}. For later reference and for the reader's convenience, we are going to give an explicit formulation and a short proof. Before stating the lemma, we need some preparation. Although we are only interested in lattices in the present paper, we can allow more general algebras in the first lemma without extra work. 

The least congruence of an algebra $K$ is called the \emph{zero congruence} of $K$; it is denoted by $\Delta$ or, 
if $K$ needs to be specified, by $\Delta_K$. An algebra $K$ is \emph{subdirectly irreducible} if it has a least nonzero congruence; this congruence is called the \emph{monolith} of $K$ and it is denoted by $\mu=\mu_K$. We use the notation $L\spleq \prod_{i\in I}L_i$ to denote that the $L_i$, $i\in I$, are algebras and $L$ is a subdirect product of them.  That is, $L$ is a subalgebra of the direct product $\prod_{i\in I}L_i$ such that the \emph{projection map}
\begin{equation}
\pi_i: L\to L_i, \text{ defined by } u\mapsto u(i),
\label{eqZhhsPnkZrZtnk}
\end{equation}
is surjective for every $i\in I$. Let $X$ be a generating set of $L$. We say that 
\begin{equation}\left.
\parbox{9cm}{a homomorphism $\psi\colon L_i\to L_j$  \emph{criticizes the generating set $X\,$} if $i,j\in I$, $i\neq j$, and  $\psi(x(i))=x(j)$ for all $x\in X$. If no homomorphism criticizes $X$, then $L\spleq \prod_{i\in I}L_i$ is an \emph{irredundant subdirect product} (with respect to $X$).}\,\,\right\}
\label{pbxRsPchtmrGrthH}
\end{equation}
Algebras consisting of at least two elements are said to be \emph{nontrivial}. For $|I|\geq 2$, if $L\spleq \prod_{i\in I}L_i$ above is an irredundant subdirect product, then all the $L_i$, $i\in I$, are nontrivial. 
If the condition given in \eqref{pbxRsPchtmrGrthH} fails, then the subdirect product is \emph{redundant}  (with respect to $X$). 

\begin{lemma}\label{lemmacZTkglnTxp} Let $L$ be a nontrivial finite algebra with a fixed generating set $X$. 
Then, up to isomorphism, $L$ is an \emph{irredundant} subdirect product  $L\spleq \prod_{i\in I}L_i$ with respect to $X$  in the sense of \eqref{pbxRsPchtmrGrthH} such that $I$ is a \emph{finite} index set and, for every $i\in I$,  $L_i$ is a finite subdirectly irreducible algebra generated by $\set{\pi_i(x):x\in X}$.
\end{lemma}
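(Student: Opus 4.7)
The plan is a two-stage argument: first produce \emph{some} finite subdirect embedding of $L$ into subdirectly irreducibles, then prune factors to achieve irredundancy with respect to $X$.

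For the first stage, I would invoke the classical Birkhoff construction. For each pair of distinct elements $a,b\in L$, pick a congruence $\theta_{a,b}$ of $L$ that is maximal with respect to not collapsing the pair $\set{a,b}$; such a congruence exists since $L$ is finite. The quotient $L/\theta_{a,b}$ is then subdirectly irreducible, because any nonzero congruence of $L/\theta_{a,b}$ corresponds to a congruence of $L$ strictly above $\theta_{a,b}$, hence collapses $a$ and $b$. Since $\bigcap_{a\neq b}\theta_{a,b}=\Delta_L$, the natural map $L\to\prod_{a\neq b}L/\theta_{a,b}$ is a subdirect embedding. Finiteness of $L$ yields finiteness of its congruence lattice, so after discarding duplicates only finitely many factors remain and we obtain $L\spleq\prod_{i\in I_0}L_i$ with $I_0$ finite and each $L_i$ a finite subdirectly irreducible quotient of $L$. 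Any surjective homomorphism carries a generating set to a generating set, so $\set{\pi_i(x):x\in X}$ generates $L_i$.

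For the second stage, I would prune one redundant factor at a time. Suppose some $\psi\colon L_i\to L_j$ with $i\neq j$ criticizes $X$ in the sense of \eqref{pbxRsPchtmrGrthH}, that is, $\psi(x(i))=x(j)$ for all $x\in X$. Then $\psi\circ\pi_i$ and $\pi_j$ are two homomorphisms $L\to L_j$ agreeing on the generating set $X$, hence they agree on all of $L$. Consequently the $j$-th coordinate of every element of $L$ is already determined by its $i$-th coordinate, so restricting the embedding $L\hookrightarrow\prod_{i\in I_0}L_i$ to the subproduct indexed by $I_0\setminus\set{j}$ remains injective, while the surviving projections are still surjective by inheritance. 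Replace $I_0$ by $I_0\setminus\set{j}$ and repeat; since $I_0$ is finite, the procedure terminates after finitely many steps in an irredundant subdirect representation with the claimed properties.

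The main obstacle is purely bookkeeping: one must verify that each pruning step preserves both injectivity of the embedding and surjectivity of the remaining projections, both of which are immediate from the ``agreement on generators'' observation above. Degenerate cases cause no trouble: if $L$ itself is already subdirectly irreducible, take $I$ to be a singleton and use nontriviality of $L$; and condition \eqref{pbxRsPchtmrGrthH} is vacuous once $|I|=1$, so irredundancy there is automatic.
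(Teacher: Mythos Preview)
Your proof is correct and follows essentially the same approach as the paper: both obtain a finite subdirect decomposition via Birkhoff and then eliminate redundant factors using the observation that a criticizing homomorphism forces $\pi_j=\psi\circ\pi_i$ on all of $L$ (the paper verifies this by an explicit term computation, you by the cleaner ``two homomorphisms agreeing on generators'' principle). The only cosmetic difference is that the paper argues by choosing a decomposition with the minimum number of factors and deriving a contradiction, whereas you prune iteratively; these are equivalent packagings of the same idea.
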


\begin{proof}
By a classical theorem of G. Birkhoff, see \cite[Theorem 1]{birkhoff}, $L$ is a subdirect product of finitely many subdirectly irreducible algebras. 
Hence, we can choose $L\spleq \prod_{i\in I}L_i$ such that the finite number of factors $|I|$  is minimal. 
Since a surjective homomorphism takes a generating set to a generating set, 
$L_i$ is  generated by $\set{\pi_i(x):x\in X}$ for all $i$. Also, $|L_i|=|\pi_i(L)|\leq |L|$ shows that $L_i$ is finite. 
We claim that our subdirect product is irredundant (with respect to $X$). Suppose the contrary, and pick $j,k\in I$ and a homomorphism $\psi \colon L_j\to L_k$ that criticizes $X$. Let $J:=I\setminus \set{k}$. Note that $j\in J$ since $j\neq k$.  Let us agree that 
\begin{equation}\left.
\parbox{6cm}{the restriction of a map $\kappa$ to a subset $A$ of its domain will be denoted by $\restrict\kappa A$.}
\,\,\right\}
\label{pbxthRsxRkNjlS}
\end{equation}
For $u\in \prod_{i\in I}L_i$, we let $u':=\restrict u J\in \prod_{i\in J}L_i$. Also, let $L':=\set{u': u\in L}$ and $X':=\set{x': x\in X}$. 
Clearly,  $L'\spleq \prod_{i\in J}L_i$, and  the map $\phi\colon L\to L'$, defined by $u\mapsto u'$, is a surjective homomorphism. Since $\phi(X)=X'$, it follows that $X'$ generates $L'$. Next, let $u\in L$ and pick a term $t$ and elements $x_1,\dots, x_s\in X$ such that $u=t(x_1,\dots,x_s)$ holds in $L$. Since $t$ commutes with $\phi$, we have that  $u'=t(x'_1,\dots,x'_s)$. We obtain that
\begin{align*} 
u(k)&=t(x_1,\dots,x_s)(k)=t(x_1(k),\dots, x_s(k)) \cr
&= t(\psi(x_1(j)),\dots,\psi(x_s(j)))= \psi(t(x_1(j),\dots,x_s(j)))\cr
&=  \psi(t(x'_1(j),\dots,x'_s(j))) =\psi(t(x'_1,\dots,x'_s)(j))=\psi(u'(j)).
\end{align*}
Since we also have, trivially, that  $u(i)=u'(i)$ for $i\in I\setminus\set k$, it follows that $u'=\phi(u)$ determines $u$, whereby $\phi$ is injective. So $\phi$ is an isomorphism and we can identify $X'=\phi(X)$ with $X$. Hence, up to isomorphism, $L\spleq \prod_{i\in I}L_i$ and $X$ can be replaced by $L'\spleq \prod_{i\in J}L_i$ and $X'$. This is a contradiction since  $|J|<|I|$ but $|I|$ was assumed to be minimal. Therefore, the subdirect product $L\spleq \prod_{i\in I}L_i$ is irredundant, as required.
\end{proof}

As usual, for a class $\var X$ of lattices, the class of homomorphic images, that of sublattices, and that of direct products of lattices belonging to $\var X$ will be denoted by $\Hs\var X$, $\Ss\var X$, and $\Ps\var X$, respectively. 
By the classical ``HSP theorem'' of Birkhoff~\cite{birkhoffHSP}, $\HSP{\var X}$, called the \emph{variety generated by $X$}, is the least equationally defined class of lattices that includes $\var X$. For a class $\var X$ of lattices, the class of subdirectly irreducible lattices of $\var X$ will be denoted by $\Si \var X $.
Since we will repeatedly use some celebrated results of J\'onsson~\cite{jonsson}, we formulate them for later references and for the reader's convenience. 
Namely, a particular case of J\'onsson~\cite[Lemma 4.1]{jonsson} asserts that
\begin{equation}\left.
\parbox{6.2cm}{If $\var W_1$ and $\var W_2$ are lattice varieties, then 
$\Si(\var W_1\vee \var W_2)=(\Si\var W_1)\cup(\Si\var W_2)$.
}\,\,\right\}
\label{pbxBJg}
\end{equation}
Also,  J\'onsson~\cite[Corollary 3.4]{jonsson} applied to lattices gives that
\begin{equation}\left.
\parbox{5.2cm}{if $\var X$ is a finite set of finite lattices, then
$\Si(\HSP\var X)\subseteq \HSs\var X$.
}\,\,\right\}
\label{pbxBJktt}
\end{equation}

\begin{lemma}\label{lemmakdlBRbFnW} Let $\var V$ be a variety of lattices, and let $k$ be a positive integer.  Also, let $K$ be a finite lattice. Denote by $\var W$ the variety $\HSP(\var V\cup\set K)$.  If the free lattice $\vfree{\var V}k$ in $\var V$ on $k$ generators is finite, then  every $k$-generated lattice in $\var W$ is finite.
\end{lemma}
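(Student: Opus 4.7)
The plan is to reduce everything to showing that the free lattice $\vfree{\var W}{k}$ on $k$ generators is finite: once that is established, every $k$-generated lattice in $\var W$ is a homomorphic image of $\vfree{\var W}{k}$ and hence finite. Since $\var W = \HSP(\var V\cup\set K)=\var V\vee \HSP\set K$, J\'onsson's lemma \eqref{pbxBJg} gives the clean decomposition
\[
\Si(\var W)=\Si(\var V)\cup \Si(\HSP\set K).
\]

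The next step is to argue that, up to isomorphism, only finitely many $k$-generated subdirectly irreducible lattices occur in $\var W$, and each of them is finite. Every $k$-generated lattice in $\var V$ is a homomorphic image of the finite lattice $\vfree{\var V}{k}$, and a finite lattice has only finitely many congruences, so the $k$-generated members of $\Si(\var V)$ form a finite set of finite lattices. On the other hand, \eqref{pbxBJktt} yields $\Si(\HSP\set K)\subseteq \HSs\set K$, so each member of $\Si(\HSP\set K)$ has at most $|K|$ elements, giving only finitely many isomorphism types on this side as well. Let $M_1,\dots,M_n$ be a complete list of representatives of the $k$-generated members of $\Si(\var W)$.

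Finally, I would embed $\vfree{\var W}{k}$ into the finite lattice $\prod_{i=1}^n M_i^{|M_i|^k}$, which will conclude the proof. For the point-separating family: given $a\neq b$ in $\vfree{\var W}{k}$, a standard Zorn argument produces a congruence $\Theta$ maximal with the property $(a,b)\notin \Theta$, and then $\vfree{\var W}{k}/\Theta$ is subdirectly irreducible in $\var W$ and $k$-generated, hence isomorphic to some $M_i$; in particular, some homomorphism $\vfree{\var W}{k}\to M_i$ separates $a$ from $b$. Each $M_i$ being finite, the number of homomorphisms $\vfree{\var W}{k}\to M_i$ is at most $|M_i|^k$ (they are determined by the images of the $k$ free generators), so assembling all of them simultaneously yields the desired injective homomorphism into a finite lattice. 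I expect no serious obstacle: the conceptual work is done by \eqref{pbxBJg} together with \eqref{pbxBJktt}, and the rest is routine universal algebra.
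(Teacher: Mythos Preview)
Your proof is correct and shares the key conceptual step with the paper's argument: both split the subdirectly irreducible members of $\var W$ via J\'onsson's lemma \eqref{pbxBJg} into those coming from $\var V$ (finite, as quotients of $\vfree{\var V}{k}$) and those coming from $\HSP\set K$ (finite by \eqref{pbxBJktt}). The difference lies in how finiteness is then extracted. The paper packages these finitely many subdirectly irreducibles, together with $K$, into a single finite lattice $E$, observes that every $k$-generated $L\in\var W$ lies in $\var E=\HSP\set E$, and then invokes Hobby and McKenzie~\cite[Theorem 0.1]{hobbymckenzie} as a black box to conclude that $L$ is finite. You instead argue directly: a Zorn argument shows that the homomorphisms from $\vfree{\var W}{k}$ into the finitely many $M_i$ separate points, and since there are at most $|M_i|^k$ such homomorphisms for each $i$, the free lattice embeds into a finite product. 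Your route is more self-contained, essentially reproving inline the needed special case of the Hobby--McKenzie result, while the paper's route is shorter once that citation is accepted.
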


\begin{proof} Since $\vfree{\var V}k$ is finite, it has only finitely many quotient lattices. Let $E$ be the direct product of $K$ and these quotient lattices, and note that $E$ is a finite lattice. Let $\var E:=\HSP\set E$. Since $K\in\Hs\set E$, we have that $\HSP\set K\subseteq \var E$. 
We know from Hobby and McKenzie~\cite[Theorem 0.1]{hobbymckenzie} that a finitely generated algebra in a variety generated by a finite set of finite algebras is necessarily finite. Hence, every $k$-generated lattice in $\var E$ is finite. Thus, it suffices to show that every $k$-generated lattice of $\var W$ belongs to $\var E$.  To do so, let $L\in \var W$ be a $k$-generated lattice. 
By  Birkhoff~\cite[Theorem 2]{birkhoff}, $L$ is a subdirect product of  subdirectly irreducible lattices $L_i$, $i\in I$, where $I$ is a (not necessarily finite) index set. As a homomorphic image of $L$, the lattice $L_i$ is generated by at most $k$ elements for every $i\in I$.
Since $L_i\in\Si(\var W)$, \eqref{pbxBJg} gives that $L_i\in\Si(\var V)\cup \Si(\HSP\set K)\subseteq \var V\cup \HSP \set K$, which permits only two cases for an  $i\in I$. First, assume that $L_i$ in $\var V$. Then it is a homomorphic image of $\vfree{\var V}k$. So it is isomorphic to one of the direct factors of $E$. Hence $L_i$ is a homomorphic image of $E$, whereby it belongs to the variety $\HSP\set E=\var E$. 
Second, assume that $L_i\in \HSP\set K$. Then $L_i\in\var E$ since $\HSP\set K\subseteq \var E$. 
We have seen that $L_i\in \var E$ for all $i\in I$. This yields that $L\in \var E$, completing the proof of the lemma.
\end{proof}

To formulate our key lemma, we need to introduce some further concepts. We say that
\begin{equation}\left.
\parbox{8.8cm}{a lattice $L$ satisfies \emph{meet condition \eqref{pbxmeetcond}}
if for each $(u_1,u_2,u_3)\in L^3$ such that  $\set{u_1,u_2,u_3}$ generates $L$, there are at least two pairs $(i,j)$ in $\set{(1,2),(1,3),(2,3)}$ such that 
$u_i\wedge u_j\neq 0$.}
\,\,\right\}
\label{pbxmeetcond}
\end{equation}
Dually, we say that 
\begin{equation}\left.
\parbox{8.8cm}{a lattice $L$ satisfies \emph{join condition \eqref{pbxjoincond}}
if for each $(u_1,u_2,u_3)\in L^3$ such that  $\set{u_1,u_2,u_3}$ generates $L$, 
there are at least two pairs $(i,j)$ in $\set{(1,2),(1,3),(2,3)}$ such that 
$u_i\vee u_j\neq 1$.}
\,\,\right\}
\label{pbxjoincond}
\end{equation}
Clearly, 
\begin{equation}
\parbox{6.7cm}{if $|L|\geq 3$ and $L$ has no three-element generating set, then $L$ satisfies both  meet condition \eqref{pbxmeetcond} and  join condition \eqref{pbxjoincond}.}
\label{pbxcbThcnDzWrtmN}
\end{equation}
As usual, we will write $0\in L$ and $1\in L$ to express that $L$ has a smallest element and a largest element, respectively. Note that, as a trivial consequence of \eqref{pbxcbThcnDzWrtmN},  $0\notin L$ implies the validity of \eqref{pbxmeetcond} and dually.

A congruence $\Theta$ of a lattice $L$ will be called \emph{$0$-separating}
if $0\in L$ and the $\Theta$-block of $0$, denoted by $\block 0\Theta$, is the singleton set $\set0$. We define \emph{$1$-separating} congruences dually. 
A variety of lattices is \emph{nontrivial} if it contains a nonsingleton lattice.
For a finite lattice $K$, a homomorphic image or a sublattice of $K$ is \emph{proper} if it has fewer elements than $K$. 
Now we are ready to formulate the following lemma.

\begin{lemma}[Key Lemma]\label{keylemma} Let $\var V$ be a nontrivial variety of lattices  such that 
the three-generated $\var V$-free lattice $\vfree{\var V}3$ is finite, and  let $K$ be a finite subdirectly irreducible lattice such that all proper homomorphic images and all proper sublattices of $K$ belong to $\var V$. If $\var W=\HSP({\var V\cup\set{K}})$ denotes the lattice variety generated by  $\var V\cup\set{K}$, then  $\vfree{\var W}3$ is also finite and, furthermore, the following three assertions hold.
\begin{enumeratei}
\item\label{keylemmaa} If $K$ satisfies  meet condition \eqref{pbxmeetcond} or the monolith $\muk $ is $0$-separating, then $\AS(\var W)=\AS(\var V)$.
\item\label{keylemmab} If $K$ satisfies  join condition \eqref{pbxjoincond} or the monolith $\muk $ is $1$-separating, then $\CS(\var W)=\CS(\var V)$.
\item\label{keylemmac} Assume that  $K$ satisfies both meet condition \eqref{pbxmeetcond} and  join condition \eqref{pbxjoincond}, or that $\muk $ is both $0$-separating and $1$-separating. Then $\DS(\var W)=\DS(\var V)$.
\end{enumeratei}
\end{lemma}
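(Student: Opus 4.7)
The finiteness of $\vfree{\var W}3$ is immediate from Lemma~\ref{lemmakdlBRbFnW} applied with $k=3$. For the three spectrum equalities, the inclusion $\supseteq$ is trivial since $\var V\subseteq \var W$, so I focus on $\subseteq$. The plan is uniform: given a three-generated $L\in \var W$, produce a three-generated $L'\in \var V$ with $|\Ats{L'}|=|\Ats L|$ (and, for (ii) and (iii), with the matching coatom count), thereby placing $|\Ats L|$ in $\AS(\var V)$, and similarly for $\CS$ and $\DS$.

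Fix a generating set $X=\{x_1,x_2,x_3\}$ of $L$ and apply Lemma~\ref{lemmacZTkglnTxp} to write $L\spleq\prod_{i\in I}L_i$ as an irredundant subdirect product with each $L_i$ finite, subdirectly irreducible, and generated by $\pi_i(X)$. Combining the J\'onsson results \eqref{pbxBJg} and \eqref{pbxBJktt} with the hypothesis that all proper sublattices and proper homomorphic images of $K$ belong to $\var V$ forces each $L_i$ either to lie in $\var V$ or to be isomorphic to $K$; partition $I=I_V\cup I_K$ accordingly, where $I_K:=\{j\in I:L_j\cong K\}$. If $I_K=\emptyset$ then $L\in \var V$ already and there is nothing to do, so assume $I_K\ne\emptyset$.

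For part~(i) in the $0$-separating case, define $\phi\colon L\to \prod_{i\in I_V}L_i\times\prod_{j\in I_K}(L_j/\muk)$ by combining the $\pi_i$ with the canonical $\muk$-quotients, and set $L':=\phi(L)$. Each $L_j/\muk$ is a proper homomorphic image of $K$, hence in $\var V$, so $L'\in \var V$ and is three-generated by $\phi(X)$. The kernel $\theta:=\ker\phi$ is $0$-separating: if $\phi(u)=0$ then $\pi_i(u)=0$ for every $i\in I_V$ and $(\pi_j(u),0)\in \muk$ for every $j\in I_K$; the $0$-separateness of $\muk$ promotes the latter to $\pi_j(u)=0$, forcing $u=0$. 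A short standard verification --- atoms avoid $0/\theta$; distinct atoms whose meet is $0$ remain distinct modulo $\theta$; and any atom of $L/\theta$ dominates an atomic preimage by finiteness of $L$ --- shows that a $0$-separating congruence induces a bijection $\Ats L\to \Ats{L'}$. For the meet-condition alternative of (i), set instead $L':=\pi_V(L)\in \var V$, where $\pi_V\colon L\to \prod_{i\in I_V}L_i$ is the obvious projection; the same atom-bijection argument applies, provided $\ker\pi_V$ is $0$-separating, i.e.\ provided no nonzero $u\in L$ satisfies $\pi_i(u)=0$ for every $i\in I_V$.

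The principal obstacle is precisely this last $0$-separating claim in the meet-condition case. I expect the contradiction to proceed as follows: write such a hypothetical exception as $u=t(x_1,x_2,x_3)>0$ for a lattice term $t$, pick $j\in I_K$ with $\pi_j(u)>0$, apply meet condition~\eqref{pbxmeetcond} to the generating triple $\pi_j(X)$ of $K\cong L_j$ to obtain two pairs $r\ne s$ with $\pi_j(x_r\wedge x_s)>0$, and then exploit the irredundancy of the subdirect product (no generator-preserving homomorphism from $L_j$ to a distinct factor $L_i$) to derive a contradiction. Working this combinatorial step out explicitly is the technical heart of the argument. Parts~(ii) and (iii) then follow by straightforward dualization (replacing $0$, meet, and atom by $1$, join, and coatom) and by running both constructions simultaneously.
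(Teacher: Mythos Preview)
Your treatment of the $0$-separating alternative in part~(i) (and hence its duals) is correct and essentially identical to the paper's: quotient each $K$-factor by $\muk$, observe that the resulting surjection $\phi\colon L\to L'$ is $0$-separating, and conclude via the atom-bijection lemma for $0$-separating surjections.

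The gap is in the meet-condition alternative. Your plan is to show that $\ker\pi_V$ is $0$-separating and then reuse the atom bijection, but this is simply false in general. Take $L=K$ itself, with any generating triple (such triples exist whenever $K\notin\var V$ is three-generated, which is compatible with the meet condition---the paper's lattice $V_8$ is an example). Then $I_V=\emptyset$, $\pi_V$ collapses everything, and your $L'$ is trivial. More broadly, the fact that $\pi_j(x_r\wedge x_s)>0$ for two pairs in a single $K$-factor tells you nothing about an arbitrary nonzero $u$ vanishing on $I_V$, and irredundancy only excludes generator-preserving homomorphisms between \emph{distinct} factors, so the sketched contradiction does not close.

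The paper proceeds quite differently here. It invokes an external fact (Observation~1.2(ii) of \cite{czgnumatoms}): for any three-generated lattice $L$, if at least two of $x_1\wedge x_2$, $x_1\wedge x_3$, $x_2\wedge x_3$ are nonzero, then $|\Ats L|\in\{2,3\}\subseteq\AS(\var V)$ and we are done. So one may assume at least two of these pairwise meets are $0$ in $L$. But then for any $j\in I_K$ the projection $\pi_j$ carries these equalities into $K$, contradicting the meet condition. Hence $I_K=\emptyset$ and $L\in\var V$ outright. The same device, run together with its dual, handles the meet-and-join case of (iii). Without this observation (or a substitute for it), your argument for the meet-condition case does not go through.
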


It will be clear from the proof that a weaker assumption would be sufficient for  the finites of $\vfree{\var W}3$, but we do not need this fact. Note also that if $K\in \var V$, then $\var W=\var V$ and the statement of the lemma trivially holds.

\begin{proof} 
We say that a surjective lattice homomorphism  is \emph{$0$-separating} if so is its congruence kernel. Equivalently, a surjective lattice homomorphism  is 0-separating if it sends nonzero elements to nonzero elements. 
We claim that for arbitrary finite lattices $T_1$ and $T_2$  and a homomorphism $\kappa\colon T_1\to T_2$,
\begin{equation}\left.
\parbox{8cm}{if $\kappa$ is surjective, then
$\kappa(\Ats{T_1})\subseteq \set 0\cup \Ats{T_2}$. If, in addition to its surjectivity,
$\kappa$ is 0-separating, then $\restrict{\kappa}{\Ats{T_1}}\colon
\Ats{T_1}\to  \Ats{T_2}$ is a bijective map.
}\,\,\right\}
\label{pbxkenkVsPkhTjQwBh} 
\end{equation}
For the sake of contradiction, suppose that $p\in\Ats{T_1}$ but $p':=\kappa(p)\notin \set 0\cup \Ats{T_2}$.  Pick an element $q'\in T_2$ such that $0<q'<p'$. Since $\kappa$ is surjective, there is a $q\in T_1$ with $\kappa(q)=q'$. Let $r:=p\wedge q$. Since $r\leq p$ and $\kappa(r)=\kappa(p)\wedge\kappa(q)=p'\wedge q'=q'\neq p'=\kappa(p)$, we have that $r<p$. From $\kappa(0)=0\neq q'=\kappa(r)$ we obtain that $r\neq 0$. Hence, we have obtained $0<r<p$, contradicting $p\in\Ats{T_1}$ and proving the inclusion
$\kappa(\Ats{T_1})\subseteq \set 0\cup \Ats{T_2}$. Now, to prove the second half of  \eqref{pbxkenkVsPkhTjQwBh}, assume that $p\in \Ats{T_1}$.
If we had that $\kappa(p)=0=\kappa(0)$, then $(p,0)$ would belong to the kernel $\Ker\kappa$ of $\kappa$, which is impossible since $\Ker\kappa$  is 0-separating. Hence $\kappa(p)\neq 0$, and it follows from the already proven first half of  \eqref{pbxkenkVsPkhTjQwBh} that  $\restrict{\kappa}{\Ats{T_1}}$ is an $\Ats{T_1}\to  \Ats{T_2}$ map. It is clearly injective since otherwise $\kappa(p)=\kappa(q)$ would hold with some distinct $p,q\in\Ats {T_1}$ and $\kappa(p)=\kappa(p)\wedge \kappa(p)=\kappa(p)\wedge \kappa(q)=\kappa(p\wedge q)=\kappa(0)=0$ would contradict the 0-separability of $\kappa$. To show the surjectivity of $\kappa$, let $p'\in\Ats{T_2}$. Let $p:=\bigwedge\set{q\in T_1: \kappa(q)=p'}$; this is a nonempty and existing meet since  $\kappa$ is surjective and $T_1$ is finite. Note that $p$ is the least preimage of $p'$ since $\kappa(p)=\bigwedge\set{\kappa(q)\in T_1: \kappa(q)=p'}=p'$. Since $p'$ is distinct from $0$, so is $p$. For the sake of contradiction, suppose that $p\notin \Ats{T_1}$ and pick an element $s\in T_1$ with $0<s<p$. Then $\kappa(s)\leq \kappa(p)=p'$. Actually, $\kappa(s)<p'$ since $p$ is the least preimage of $p'$. Also, $\kappa(s)\neq 0$ since $\kappa$ is 0-separating. 
Hence, $0<\kappa(s)<p'$ contradicts that $p'\in\Ats{T_2}$ and proves the surjectivity of $\kappa$. Thus, \eqref{pbxkenkVsPkhTjQwBh} has been proved.

Next, let $L\in \var W$ be a three-generated lattice. By a classical theorem of Birkhoff~\cite{birkhoff}, we can assume that it is a subdirect product 
\begin{equation}L\spleq \prod_{i\in I}L_i,\text{ where $L_i$ is 3-generated and subdirectly irreducible}
\label{eqHzrNmrCtJlRg}
\end{equation}
for all $i\in I$. (At present, we do not claim that $I$ is finite.) By \eqref{pbxBJg}, 
$L_i\in \var V \cup \HSP\set K$ for every $i\in I$. For a moment, let us focus on the possibility that $L_i\in \HSP\set K$. Since $L_i$ is subdirectly irreducible and $K$ is finite, \eqref{pbxBJktt} yields that $L_i\in \HSs\set K$, and there are only two cases. Either  $|L_i|=|K|$ and then  $L_i$ is isomorphic to $K$ and so we can assume that $L_i=K$ in this case, or $|L_i|<|K|$ and then $L_i$ in $\var V$ follows from the assumption on proper homomorphic images and sublattices of $K$. 
If $L_i\notin \HSP\set K$, then $L_i\in \var V \cup \HSP\set K$ gives again that $L_i$ in $\var V$. Hence, letting 
\begin{equation}\left.
\parbox{8.4cm}
{$H:=\set{i\in I: L_i=K}$ and  $J:=\set{i\in I: L_i\not\cong  K\text{ and}
\\ L_i\in\var V}$, we can assume that $I=H\cup J$ and $H\cap J=\emptyset$.}
\,\,\right\}
\label{pbxZhRnWsnpXq} 
\end{equation}
Note that one of $H$ or $J$ can be empty but this will not cause any problem since the direct product of an empty family of lattices is meaningful: it is the trivial lattice, that is, the singleton lattice. 
Clearly, the projection maps $\pi_H\colon L\to \prod_{i\in H}$, defined by 
$u\mapsto \restrict u H$, and  $\pi_J\colon L\to \prod_{i\in J}$, defined by $u\mapsto \restrict u J$, are homomorphisms. Let $L_H:=\pi_H(L)$ and  $L_J:=\pi_J(L)$. Since $L_H$ and $L_J$ are homomorphic images of $L$, both are three-generated.  We know from Hobby and McKenzie~\cite[Theorem 0.1]{hobbymckenzie} that a finitely generated algebra in a variety generated by a finite set of finite algebras is necessarily finite. This fact and  $L_H\in\HSP\set K$ yield that the three-generated lattice $L_H$ is finite. On the other hand, $L_J\in \var V$ is a homomorphic image of $\vfree{\var V}3$ and so $L_J$ is also finite. Using that the map (in fact, homomorphism) 
$L\to L_H\times L_J$, defined by $u\mapsto (\pi_H(u), \pi_J(u))=(\restrict u H, \restrict u J)$ is injective, it follows that $L$ is finite. In particular,  $\vfree{\var W}3$ is finite, as required.

Now that we know that $L$ is finite, Lemma~\ref{lemmacZTkglnTxp} applies. So from now on, $I$, $H$, and $J$ in \eqref{eqHzrNmrCtJlRg} and \eqref{pbxZhRnWsnpXq} are finite index sets and \eqref{eqHzrNmrCtJlRg} is an irredundant subdirect product with respect for a fixed three-element generating set $X=\set{x,y,z}$ of $L$.
Keeping \eqref{pbxZhRnWsnpXq} in mind, consider the map
\begin{equation}
\begin{aligned}
\phi\colon L \to \prod_{i\in H} (K/\muk) \times \prod_{i\in J} L_i,\text{ defined by } u\mapsto u' \text{ such} \\
\text{that, for $i\in I$, }\,\,u'(i)=
\begin{cases}
\block{u(i)}\muk,&\text{if }i\in H,\cr
u(i),&\text{if }i\in J.
\end{cases}
\end{aligned}
\label{eqlnGnbmtrbK}
\end{equation}
Clearly, $\phi$ is a lattice homomorphism. Since  $K/\muk$ and the $L_i$ for $i\in J$ are all in $\var V$, the lattice 
\begin{equation}
L':=\phi(L)=\set{u': u\in L}\text{ belongs to }\var V.
\label{eqhzRmsZstBQxC}
\end{equation} 
Since $L'$ is defined as the $\phi$-image of $L$, the map $\phi\colon L\to L'$ is a surjective lattice homomorphism, 
whereby $L'$ is also a three-generated lattice. 

Now, we are in the position to prove part \eqref{keylemmaa} of Lemma~\ref{keylemma}. We assume that $K\notin \var V$ since otherwise the statement is trivial. 
Note that $\set{1,2,3}\subseteq \AS(\var V)$, because $\var V$ is a nontrivial variety of lattices, whence $\var D\subseteq \var V$ and \eqref{eqknXmpHla} applies. We need to show that $|\Ats L|\in \AS(\var V)$. There are two cases to consider. 

First, assume that $K$ satisfies meet condition \eqref{pbxmeetcond}. Based on Cz\'edli~\cite{czgnumatoms}, we can assume that \begin{equation}
\parbox{8cm}{at least two of $x\wedge y$, $x\wedge z$, and $y\wedge z$ are $0=0_L$;}
\label{eqpbxdzhGrmJrmrTkC}
\end{equation}
indeed, if  \eqref{eqpbxdzhGrmJrmrTkC} fails, then $|\Ats L|\in\set{2,3}$ by \cite[Observation 1.2.(ii)]{czgnumatoms} and so $|\Ats L|\in\AS(\var V)$, as required.
For the sake of contradiction, suppose that   $H\neq\emptyset$, see \eqref{pbxZhRnWsnpXq}, and let $i\in H$. Then $\pi_i$ from \eqref{eqZhhsPnkZrZtnk} is a surjective $L\to K$ homomorphism, and $\set{\pi_i(x),\pi_i(y),\pi_i(z)}=\set{x(i),y(i),z(i)}$ generates $K$. But this contradicts our recent assumption that $K$ satisfies meet condition \eqref{pbxmeetcond} since $\pi_i$ preserves \eqref{eqpbxdzhGrmJrmrTkC}.
This shows that $H=\emptyset$. Using that
\begin{equation}
\parbox{8cm}{$H=\emptyset$,
  \eqref{eqlnGnbmtrbK}, and \eqref{eqhzRmsZstBQxC}  lead to $L=L'\in\var V$,}
\label{eqpbxsWfmsjbnrBlG}
\end{equation}
we obtain that $|\Ats L|=|\Ats {L'}|\in \AS(\var V)$. 
We have settled the case when $K$ satisfies meet condition \eqref{pbxmeetcond}. 

Second, we assume that  $\muk$ is 0-separating but $K$ fails to satisfy meet condition \eqref{pbxmeetcond}. Since $K\notin\var V$ has been assumed, $K$ cannot be generated by less than three elements. Hence, we conclude from \eqref{pbxcbThcnDzWrtmN} that  $K$ is three-generated.
Assume also that $u\in L\setminus \set 0$, let $u':=\phi(u)$, and pick an index $i\in I$ such that $u(i)\neq 0$.  Either since $i\in H$ and $\muk$ is 0-separating, or since $i\in J$ and $u'(i)=u(i)$, \eqref{eqlnGnbmtrbK} yields that $u'(i)\neq 0$ and so $u'\neq 0$. Hence, the surjective homomorphism $\phi$ is 0-separating. Applying \eqref{pbxkenkVsPkhTjQwBh} to $\phi$, we obtain that 
\begin{equation}
\text{$\restrict\phi{\Ats L}\colon \Ats L\to \Ats{L'}$ is a bijective map.}
\label{eqHmrIkszGpTk}
\end{equation}
Since $L'\in\var V$ by \eqref{eqhzRmsZstBQxC},  \eqref{eqHmrIkszGpTk} above yields that  $|\Ats L|=|\Ats{L'}|\in \AS(\var V)$. This shows that $\AS(\var W)\subseteq \AS(\var V)$. Since the converse inclusion is a trivial consequence of $\var V\subseteq \var W$, we have proved part \eqref{keylemmaa}. 

Part \eqref{keylemmab} follows from part \eqref{keylemmaa} by duality.

Finally, the argument for  \eqref{keylemmac} also splits into two cases. Again, still assuming that $L\in \var W$, 
it suffices to show that $(|\Ats L,\Coats L|)\in \DS(\var V)$.
First, assume that $K$ satisfies both meet condition \eqref{pbxmeetcond}  and  join condition \eqref{pbxjoincond}. As a subcase, assume also that the index set $H$ is nonempty and pick an  $i\in H$. 
If the fixed generating set $X=\set{x,y,z}$ of $L$ satisfied \eqref{eqpbxdzhGrmJrmrTkC}, then $\set{\pi_i(x),\pi_i(y),\pi_i(z)}$ would be a generating set of $K$ and $\pi_i$ would preserve the equalities listed in  \eqref{eqpbxdzhGrmJrmrTkC}, but this would contradict that meet condition \eqref{pbxmeetcond} holds in $K$. Hence,  \eqref{eqpbxdzhGrmJrmrTkC} fails, whereby
Cz\'edli~\cite[Observation 1.2.(ii)]{czgnumatoms} gives that $|\Ats L|\in\set{2,3}$. 
Since  join condition \eqref{pbxjoincond} is also assumed, duality applies and we also have that
$|\Coats L|\in\set{2,3}$. Hence, $(|\Ats L|,|\Coats L|)\in\set{2,3}^2\subseteq \DS(\var D)\subseteq \DS(\var V)$, provided there is an $i$ in $H$. If there is no such $i$, then $H=\emptyset$ gives that $L\in \var V$ by \eqref{eqlnGnbmtrbK}, whence $(|\Ats L|,|\Coats L|)\in\DS(\var V)$ again, as required.

Second, assume that  $\muk $ is both $0$-separating and $1$-separating. Then, in addition to \eqref{eqHmrIkszGpTk}, we also have that
$\restrict\phi{\Coats L}\colon \Coats L\to \Coats{L'}$ is a bijective map by duality.
Hence, similarly to the proof of part \eqref{keylemmaa}, 
$(|\Ats L,\Coats L|) = (|\Ats{L'},\Coats{L'}|)\in \DS(\var V)$ as required.  The proof of Lemma~\ref{keylemma} is complete.
\end{proof}

Since the projection maps $\pi_i$ from \eqref{eqZhhsPnkZrZtnk} preserve equalities, the following remark is a trivial; we formulate it for later reference. 
\begin{remark}\label{remarkZfhrBmW} If $X=\set{x,y,z}$ is a fixed generating set of a subdirect product $L\spleq \prod_{i\in I}L_i$
such that  $(x,y,z)$ witnesses a failure of meet condition \eqref{pbxmeetcond} in $L$, then  so does $(\pi_i(x), \pi_i(y), \pi_i(z))$ in $L_i$ for every $i\in I$.  Hence, using that $x$, $y$, and $z$ play symmetric roles and based on the explanation around \eqref{eqpbxdzhGrmJrmrTkC}, we will  frequently assume that $x\wedge z=y\wedge z=0$ in $L$ and so
\begin{equation}
\pi_i(x)\wedge \pi_i(z)= \pi_i(y)\wedge \pi_i(z)=0\text{ in }L_i,\text{ for every }i\in I.
\label{eqdHtrsbmDnMsb}
\end{equation}
\end{remark}

\begin{figure}[ht]
\centerline
{\includegraphics[scale=1.0]{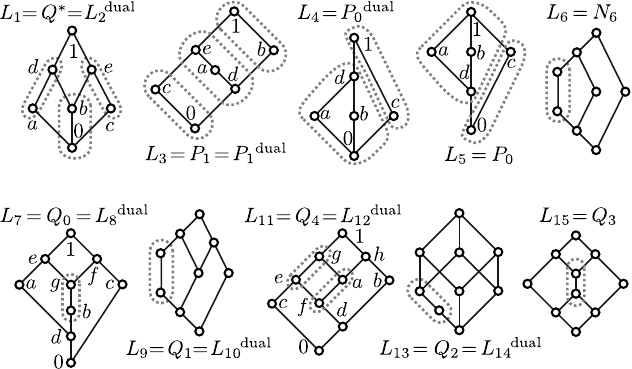}}
\caption{$L_1$, \dots, $L_{15}$, and their monolith congruences}
\label{figbb}
\end{figure}

\section{An interval in the lattice of all lattice varieties}\label{sect:intvl}

First of all, we need to recall some known concepts and notations and introduce some further notations.
Let $\var N_5$ denote the lattice variety $\HSP \set{N_5}$ generated by the pentagon lattice $N_5$; see Figure~\ref{figaa}. As usual, $M_3$ stands for the 5-element modular but not distributive lattice, see Figure~\ref{figaa} again, and we denote by $\var M_3$ the variety it generates. The dual of a lattice $L$ will be denoted by $\dual L$. When  dealing with elements of $\alllat$, that is, with lattice varieties, then $\prec$ stands for the covering relation understood in $\alllat$. In Figures~\ref{figbb} and \ref{figcc} (disregard the dotted ovals in the moment), we give the lattices playing the main role in this paper. Namely, $L_1$, ..., $L_{15}$ are taken from  McKenzie \cite{mckenzie} while the lattices $V_1$, \dots, $V_8$ from Ruckelshausen~\cite{ruckel}; see also Jipsen and Rose~\cite[Pages 19--20]{jipsenrose} for a secondary source. Note that in addition to McKenzie's original notations like $Q^*$ and $P_1$, Figure~\ref{figaa} gives the notations due to J\'onsson and Rival \cite{jonssonrival}. For $i\in\set{1,\dots,15}$ and $j\in\set{1,\dots,8}$, we let
\begin{align}\left.
\parbox{7.0cm}{
$\var N_5:=\HSP\set{N_5},\quad \var M_3:=\HSP\set{M_3}$, \\
$\kern 20pt \var L_i:=\HSP\set{L_i},\quad\var V_j:=\HSP\set{V_j}$, and, for any lattice variety $\var Z$, we let $\mvar Z:=\var Z\vee \var M$.}
\,\,\right\}
\label{pbxmskSgnKa}
\end{align}
Note that Jipsen and Rose~\cite[Page 21]{jipsenrose} denotes $\mvar N_5$ by $\var M^+$. In addition to this variety, \eqref{pbxmskSgnKa} defines twenty-three varieties larger than the variety $\var M$ of modular lattices; these varieties are the $\mvar L_i$ for $i\in\set{1,\dots, 15}$ and the $\mvar V_j$ for $j\in \set{1,\dots,8}$.

The following proposition is likely to belong to the folklore of lattice theory since it follows easily from widely known ideas.  Having no reference at hand, we are going to present a proof for it.

\begin{figure}[ht]
\centerline
{\includegraphics[scale=1.0]{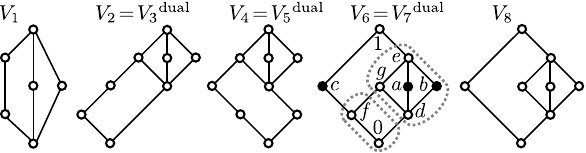}}
\caption{$V_1$, \dots, $V_{8}$, and the monolith congruence of $V_6$}
\label{figcc}
\end{figure}

\begin{proposition}\label{prophszHfd} In the lattice $\alllat$ of all lattice varieties, the following hold.  
\begin{enumeratei}
\item\label{prophszHfda} $\var M\prec \mvar N_5$ and $\mvar N_5$ is the only cover of $\var M$ in $\alllat$. Furthermore, for every $\var Y\in\alllat$, \   $\var Y\not\leq\var M$ implies that $\mvar N_5\leq \var Y$.
\item\label{prophszHfdb} For $i\in\set{1,\dots,15}$, we have that $\mvar N_5\prec \mvar L_i$. 
\item\label{prophszHfdc} For $j\in\set{1,\dots,8}$, we have that $\mvar N_5\prec \mvar V_j$. 
\item\label{prophszHfdd} The set $\set{\mvar L_i: 1\leq i\leq 15}\cup \set{\mvar V_j: 1\leq j\leq 8}$ is a $23$-element subset of $\alllat$ and it generates a sublattice isomorphic to the $2^{23}$-element Boolean lattice, which is an interval in $\alllat$; this sublattice is denoted by $\bhhrom$. 
\end{enumeratei}
\end{proposition}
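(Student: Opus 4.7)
The plan is to rely on the classical results of Jónsson and McKenzie on the bottom of $\alllat$, supplemented by Ruckelshausen's list, and to let Jónsson's lemma \eqref{pbxBJg} together with the distributivity of $\alllat$ do the heavy lifting in all four parts.

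For part \eqref{prophszHfda}, the starting point is the classical fact (see McKenzie~\cite{mckenzie} or Jónsson and Rival~\cite{jonssonrival}) that $\var N_5$ is the unique minimal nonmodular variety. From this, $\mvar N_5 = \var M \vee \var N_5$ is seen to cover $\var M$: any $\var Y$ properly extending $\var M$ contains some nonmodular lattice, hence $\var N_5\leq \var Y$, and the ``furthermore'' clause is exactly this consequence.

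For parts \eqref{prophszHfdb} and \eqref{prophszHfdc}, I would invoke McKenzie's classification of the fifteen join-irreducible covers $\var L_1,\dots,\var L_{15}$ of $\var N_5$ and Ruckelshausen's list of the eight further covers $\var V_1,\dots,\var V_8$ (see also \cite[pp.\ 19--20]{jipsenrose}). The key input from these classifications is that, for each generator, the subdirectly irreducibles in $\HSs\{L_i\}$ (resp.\ $\HSs\{V_j\}$) beyond the generator itself already lie in $\mvar N_5$. To prove $\mvar N_5\prec \mvar L_i$, assume $\mvar N_5\leq \var Y\leq \mvar L_i$ and dichotomize on whether $L_i\in \var Y$. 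If $L_i\in\var Y$, then $\var Y\geq \var L_i$, and joining with $\var M\subseteq \mvar N_5\subseteq \var Y$ yields $\var Y\geq \mvar L_i$. If $L_i\notin\var Y$, then by \eqref{pbxBJg} and \eqref{pbxBJktt} we have $\Si(\var Y)\subseteq \Si(\mvar L_i)\setminus\{L_i\}\subseteq \Si(\mvar N_5)$, hence $\var Y\subseteq \mvar N_5$ by Birkhoff's subdirect decomposition. The argument for $\mvar V_j$ is identical.

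For part \eqref{prophszHfdd}, the 23 varieties are pairwise distinct (inherited from the originals). Since $\alllat$ is distributive and the 23 varieties are pairwise incomparable covers of $\mvar N_5$, the sublattice $B$ they generate over $\mvar N_5$ is forced by distributivity to be the $2^{23}$-element Boolean lattice with top $\Thhrom$. To see $B$ is an interval in $\alllat$, take any $\var Y$ with $\mvar N_5\leq \var Y\leq \Thhrom$, let $S:=\{i:L_i\in \var Y\}$ and $T:=\{j:V_j\in\var Y\}$, and set
\[
\var Y':=\mvar N_5\vee \bigvee_{i\in S}\mvar L_i\vee \bigvee_{j\in T}\mvar V_j\in B.
\]
Then $\var Y'\leq \var Y$ by construction. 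Applying \eqref{pbxBJg} to $\Thhrom$ and using the same sub-SI inventory as in parts (ii)--(iii), each $S\in\Si(\var Y)$ lies in $\Si(\var M)\cup\bigcup_i\Si(\var L_i)\cup\bigcup_j\Si(\var V_j)$, and a case analysis shows each such $S$ belongs to $\var Y'$. Hence $\var Y\subseteq \var Y'$ and $\var Y=\var Y'\in B$.

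The main obstacle I expect is verifying the sub-SI bookkeeping: for each of the 23 generators, one must check that every subdirectly irreducible in $\HSs\{L_i\}$ (or $\HSs\{V_j\}$) other than the generator itself already lies in $\mvar N_5$, and that none of the 23 generators collapse into a join of the others. This is exactly what makes each $\mvar L_i$ and $\mvar V_j$ an independent atom of the interval $[\mvar N_5,\Thhrom]$, and although it is implicit in \cite{mckenzie} and \cite{ruckel}, it requires a careful case-by-case inspection of the lattices in Figures~\ref{figbb}--\ref{figcc}.
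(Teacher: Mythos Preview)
Your proposal is correct but differs from the paper's proof in an instructive way.  For parts \eqref{prophszHfdb} and \eqref{prophszHfdc}, the paper exploits the (upper) semimodularity of $\alllat$: since $\var N_5\prec\var L_i$, joining with $\var M$ forces either $\mvar N_5=\mvar L_i$ or $\mvar N_5\prec\mvar L_i$, and the first alternative is ruled out by a one-line size argument via \eqref{pbxBJg} and \eqref{pbxBJktt} (if $L_i\in\mvar N_5=\var N_5\vee\var M$ then $L_i\in\HSs\{N_5\}$ or $L_i\in\var M$, both impossible).  This completely sidesteps the ``sub-SI bookkeeping'' you flag as your main obstacle: no inspection of proper quotients or sublattices of the $L_i$ or $V_j$ is needed at this stage.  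Your dichotomy on whether $L_i\in\var Y$ also works, but the inclusion $\Si(\mvar L_i)\setminus\{L_i\}\subseteq\Si(\mvar N_5)$ that it rests on is precisely the bookkeeping you hoped to avoid.  (The paper does establish that inclusion later, as \eqref{pbxPrshMKrsTa}--\eqref{pbxPrshMKrsTb}, but again not by case-by-case inspection: it is deduced abstractly from the covering relations \eqref{pbxmckjrrGhT}--\eqref{pbxrchsPdspKzg} together with \eqref{pbxBJg}--\eqref{pbxBJktt}.)

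For part \eqref{prophszHfdd}, your explicit construction of $\var Y'$ is fine and essentially equivalent to the paper's route, which simply cites the general fact that $n$ atoms in a distributive lattice generate a $2^n$-element Boolean interval.  One point that does need more care than you give it: distinctness of the $23$ varieties is not automatically ``inherited from the originals,'' since joining with $\var M$ could in principle identify two of them.  The paper argues this separately, using \eqref{pbxBJg} to show $L_i\notin\mvar L_{i'}$ for $i\neq i'$, $V_j\notin\mvar V_{j'}$ for $j\neq j'$, and $V_j\notin\mvar L_i$ (the last via a height comparison in $\filter\var N_5$).
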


Some elements of $\bhhrom$  are outlined in Figure~\ref{figdd}. We do not know whether all covers of $\mvar N_5$ are listed in Proposition~\ref{prophszHfd}.

\begin{proof}[Proof of Proposition~\ref{prophszHfd}]
First, we collect some known results that are needed.  McKenzie~\cite{mckenzie} conjectured and J\'onsson and Rival \cite{jonssonrival} proved that 
\begin{equation}\left.
\parbox{8cm}{$\var L_1$, \dots, $\var L_{15}$, and $\var N_5\vee \var M_3$ are sixteen distinct covers of $\var N_5$ in  $\alllat$. Furthermore, if $\var Y\in \alllat$ such that $\var N_5< \var Y$, then $\var Y$ includes at least one of these sixteen covers. Also, $\var L_i$ is join-irreducible in $\alllat$ for $i\in\set{1,\dots,15}$.}
\,\,\right\}
\label{pbxmckjrrGhT}
\end{equation} 
The join-irreducibility of $\var L_i$ is an easy consequence of 
B.\ J\'onsson's  \eqref{pbxBJg} and \eqref{pbxBJktt}, and it is explicitly mentioned in the last paragraph of page 18 in Jipsen and Rose~\cite{jipsenrose}.
Note that it follows from the second half of \eqref{pbxmckjrrGhT} that $\var N_5$ has \emph{exactly} sixteen covers. Ruckelshausen~\cite{ruckel}
proved that 
\begin{equation}\left.
\parbox{8cm}{$\var V_1,\dots,\var V_8$ are pairwise distinct join-irreducible elements of $\alllat$  and $\var M_3\vee \var N_5 \prec \var V_j\,\,$  for $\,\,j\in\set{1,\dots,8}$;}\,\,\right\}
\label{pbxrchsPdspKzg}
\end{equation}
see also Jipsen and Rose~\cite[Pages 19--20]{jipsenrose} for a secondary source. Unfortunately, we do not know whether $\var V_1,\dots,\var V_8$ is the list of \emph{all} covers or $\var N_5\vee \var M_3$.

Part \eqref{prophszHfda} of Proposition~\ref{prophszHfd} is trivial by Dedekind's modularity criterion. 

To prove  \eqref{prophszHfdb}, let $i\in\set{1,\dots,15}$. Since $\var N_5\prec \var L_i$ by \eqref{pbxmckjrrGhT}, the (upper) semimodularity of $\alllat$ yields that either $\mvar N_5=\var N_5\vee \var M = \var L_i\vee \var M=\mvar L_i$, or  $\mvar N_5\prec\mvar L_i$. We need to exclude the first alternative. For the sake of contradiction, suppose that  $\mvar N_5=\mvar L_i$. Since $L_i$ is subdirectly irreducible and $L_i\in \mvar L_i=\mvar N_5=\var N_5\vee \var M$,  \eqref{pbxBJg} gives that $L_i\in \var N_5$ or $L_i\in \var M$. This leads to $L_i\in \var N_5$ since  $L_i$ is not modular. Hence  \eqref{pbxBJktt} gives that $L_i\in\HSs\set{N_5}$, contradicting  $|L_i|>5=|N_5|$. This excludes the first alternative and proves part  \eqref{prophszHfdb}.

To prove  \eqref{prophszHfdc}, let  $j\in\set{1,\dots,8}$. Since $\var M_3\vee \var N_5\prec \var V_j$ by \eqref{pbxrchsPdspKzg}, the semimodularity of $\alllat$ gives that either $\mvar N_5= \var M\vee  \var N_5=  (\var M\vee \var M_3)\vee \var N_5 = \var M\vee (\var M_3\vee \var N_5) = \var M\vee \var V_j=\mvar V_j$, or $\mvar N_5 \prec \mvar V_j$. For the sake of contradiction, suppose  that  $\mvar N_5 =\mvar V_j$. Similarly to the previous paragraph, the subdirect irreducibility of $V_j$, \ $V_j\in \mvar V_j=\mvar N_5=\var N_5\vee\var M$,   \eqref{pbxBJg}, and   \eqref{pbxBJktt} give that $V_j\in\var M$ or $V_j\in\HSs\set{N_5}$, but this is a contradiction since $V_j$ is not modular and $|V_j[>|N_5|$.  We have excluded the first alternative and proved part \eqref{prophszHfdc}.

Observe that  $\var L_1$, \dots, $\var L_{15}$, as distinct covers of $\var N_5$,  are pairwise incomparable. Hence,   $L_i\notin \Si(\var L_{i'})$ if $i'\neq i$ and  
$\set{i,i'}\subseteq \set{1,\dots, 15}$. Also, $L_i\notin \var M$. Thus,
the subdirect irreducibility of $L_i$ and  
\eqref{pbxBJg} give that $L_i\notin \var L_{i'}\vee \var M=\mvar L_{i'}$,
whereby $\mvar L_i\not\leq \mvar L_{i'}$. Therefore,  the varieties $\mvar L_1$, \dots, $\mvar L_{15}$ are pairwise distinct. So are $\mvar V_1$, \dots, $\mvar V_{8}$ by an analogous reasoning. 
By \eqref{pbxmckjrrGhT} and \eqref{pbxrchsPdspKzg},  $\var V_j$ is of height 2 in the principal filter $\filter \var N_5$ of $\alllat$ 
but $\var L_i$ is only of height 1. Consequently,  $\var V_j\not\leq \var L_i$ and so  $V_j\notin \var L_i$. Also, $V_j$ is not in $\var M$ but it is subdirectly irreducible, whereby \eqref{pbxBJg} yields that $V_j\notin \var L_i\vee\var M=\mvar L_i$. Thus,  $\mvar V_j\not\leq \mvar L_i$. In particular,  $\mvar V_j\neq \mvar L_i$, and we conclude that 
 \eqref{prophszHfdd} presents a 23-element set, as required.
By the already proven \eqref{prophszHfdb} and \eqref{prophszHfdc}, this set consists of atoms of the filter $\filter \mvar N_5$.  

Next, we extract from the literature that, for any positive integer $n$, 
\begin{equation}
\parbox{10cm}{every $n$-element set of atoms of a distributive lattice generates a $2^n$-element Boolean sublattice and this sublattice is an interval.}
\label{pbxZrmTsjkQqG}
\end{equation}
To show this, let $D$ be a distributive lattice, let $a_1,\dots, a_n$ be pairwise distinct atoms of $D$, and let $S$ be the sublattice generated by $\set{a_1,\dots,a_n}$. 
%There can be more atoms and $D$ is not assumed to be finite. However, it follows by distributivity that $S$ is finite. 
If we had that 
$a_i\leq a_1\vee\dots\vee a_{i-1}$ for some $i\in\set{2,\dots,n}$, then distributivity would give that $a_i=a_i\wedge(a_1\vee\dots a_{i-1})=(a_i\wedge a_1)\vee\dots\vee(a_i\wedge a_{i-1})$.
But atoms are join-irreducible, whence $a_i=a_i\wedge a_j$ for some $j\in\set{1,\dots,i-1}$, that is, we would have that $a_i\leq a_j$, contradicting that $a_i$ and $a_j$ are distinct atoms. Hence,
$a_i\not\leq a_1\vee\dots a_{i-1}$ for all  $i\in\set{2,\dots,n}$. By 
the $\textup{(ii)}\Rightarrow \textup{(i)}$ and $\textup{(ii)}\Rightarrow \textup{(iii)}$ parts of Theorem 380 of 
 Gr\"atzer~\cite{ggfoundbook}, it follows that $\set{a_1,\dots, a_n}$ is an independent set of atoms and the height of $a_1\vee\dots a_n$ is $n$. Thus, by the definition of independence, $S$ is a boolean lattice of length $n$ and size $2^n$. By the structure theorem of finite distributive lattices, see Gr\"atzer~\cite[Theorem 107]{ggfoundbook}, a distributive lattice of length $n$ cannot have more than $2^n$  elements. Hence, all elements of the interval $[0, a_1\vee\dots\vee a_n]$ of $D$ belong to the $2^n$-element sublattice $S$. Therefore, this interval is $S$. 
This proves \eqref{pbxZrmTsjkQqG}. Note that \eqref{pbxZrmTsjkQqG} also follows from Cz\'edli~\cite[Proposition 2.1.(iv)]{czgcoord} since $D$ is a locally finite lattice. 

We have already seen that the set mentioned in \eqref{prophszHfdd} is a 23-element set of atoms in the filter $\filter\mvar N_5$. Thus, \eqref{pbxZrmTsjkQqG} implies part  \eqref{prophszHfdd} of Proposition~\eqref{prophszHfd}. 
\end{proof}

\section{The spectra of some lattice varieties}\label{sect:somespectra}
\begin{proposition}\label{propgMncmp}
 Let $\var W$ be a nontrivial variety of lattices and let
\begin{align}
\var B&:=\mvar V_8 \vee \bigvee_{i=6}^{15}   \mvar L_i \vee\bigvee_{j=1}^5 \mvar V_j\quad \text{ and }\quad\var B':= \var B\vee \mvar L_2\vee \mvar V_7.
\label{eqthmgPBMlPcsPdlha}%\\
\end{align}
\begin{enumeratei}
\item\label{propgMncmpa} If $\var W\subseteq \var B$, then $\DS(\var W)$ equals $\DS(\var D)$, which is given in  \eqref{eqknXmpHld}.
\item\label{propgMncmpb} If $\var W\subseteq \var B'$, then $\AS(\var W)=\AS(\var D)=\set{1,2,3}$; see  \eqref{eqknXmpHla}.
\end{enumeratei}
Also, if $\var E$ is a lattice variety such that $\mvar N_5\leq \var E$ then
\begin{enumeratei}{\setcounter{enumi}2}
\item\label{propgMncmpd} $\DS(\var W)=\DS(\var E)$ for every lattice variety $\var W$ belonging to the interval $[\var E,\var E\vee \var B]$ of $\alllat$, and
\item\label{propgMncmpe} $\AS(\var W)=\AS(\var E)$ for every  $\var W\in [\var E,\var E\vee \var B']$.
\end{enumeratei}
\end{proposition}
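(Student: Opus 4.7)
The plan is to apply the Key Lemma (Lemma~\ref{keylemma}) iteratively in order to push the spectrum information from a small base variety up to $\var B$ or $\var B'$. For part~\eqref{propgMncmpa} I would set $\var V_0 := \var M$ and build an ascending chain $\var V_0 \subset \var V_1 \subset \cdots \subset \var V_n = \var B$ via $\var V_s := \HSP(\var V_{s-1} \cup \{K_s\})$, where $K_s$ runs through a chosen ordering of the subdirectly irreducible lattices $N_5, L_6, \ldots, L_{15}, V_1, \ldots, V_5, V_8$ that generate $\var B$ over $\var M$. Lemma~\ref{lemmakdlBRbFnW} inductively guarantees that $\vfree{\var V_s}3$ is finite (starting from the well-known fact that $\vfree{\var M}3$ has $28$ elements), so Lemma~\ref{keylemma} applies at every step. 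If the hypotheses of Lemma~\ref{keylemma}\eqref{keylemmac} hold at each stage, then $\DS(\var V_s) = \DS(\var V_{s-1})$, and a telescoping argument yields $\DS(\var B) = \DS(\var M) = \DS(\var D)$, the last equality by~\eqref{eqknXmpHld}. Since every nontrivial lattice variety contains $\var D$, the sandwich $\var D \subseteq \var W \subseteq \var B$ together with~\eqref{eqknXmpHle} then forces $\DS(\var W) = \DS(\var D)$. Part~\eqref{propgMncmpb} follows analogously using the weaker Lemma~\ref{keylemma}\eqref{keylemmaa}, which is also permissive enough to admit $L_2$ and $V_7$ into the induction, yielding $\AS(\var B') = \AS(\var D) = \{1,2,3\}$.

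At each step of the induction I must verify, for the freshly added $K_s$, that (a) every proper sublattice and every proper homomorphic image of $K_s$ already belongs to $\var V_{s-1}$, and (b) either $K_s$ satisfies both meet condition~\eqref{pbxmeetcond} and join condition~\eqref{pbxjoincond}, or the monolith $\mu_{K_s}$ is simultaneously $0$- and $1$-separating (with the one-sided analog needed in part~\eqref{propgMncmpb}). Condition~(a) should be routine: the monolith markings in Figures~\ref{figbb} and~\ref{figcc} show that every proper homomorphic image of $K_s$ is modular and therefore already lies in $\var V_0 \subseteq \var V_{s-1}$, and by ordering the induction from smaller to larger lattices any non-modular proper sublattice of $K_s$ will have been added at an earlier step. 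The main obstacle is condition~(b): I must perform a case-by-case inspection of Figures~\ref{figbb} and~\ref{figcc}, verifying for each $K_s$ in the $\var B$-list that the block of $0$ and the block of $1$ under $\mu_{K_s}$ are both singletons, and for the two additional generators $L_2, V_7$ of $\var B'$ verifying only the $0$-separating half (or, alternatively, meet condition~\eqref{pbxmeetcond}). This diagram-reading is the only genuinely computational content of the proof.

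For parts~\eqref{propgMncmpd} and~\eqref{propgMncmpe} I cannot apply Lemma~\ref{keylemma} directly with $\var V_0 := \var E$, since $\vfree{\var E}3$ is not assumed to be finite. My plan is instead to imitate the internal subdirect-product argument from the proof of Lemma~\ref{keylemma}: given a $3$-generated $L \in \var W \subseteq \var E \vee \var B$, Birkhoff's theorem combined with J\'onsson's~\eqref{pbxBJg} yields $L \spleq \prod_{i\in I} L_i$ with each $L_i$ subdirectly irreducible and $L_i \in \var E \cup \Si(\var B)$. By~\eqref{pbxBJktt} the indices $i$ with $L_i \notin \var E$ come from a finite list of non-modular generators of $\var B$ whose monoliths are both $0$- and $1$-separating, exactly the lattices already catalogued in the previous paragraph. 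Factoring those components by their monoliths, as in the proof of Lemma~\ref{keylemma}, produces a surjective $0$- and $1$-separating homomorphism $\phi\colon L \to L'$ with $L' \in \var E$; by~\eqref{pbxkenkVsPkhTjQwBh} this forces $(|\Ats L|,|\Coats L|) = (|\Ats{L'}|,|\Coats{L'}|) \in \DS(\var E)$, and the sandwich $\var E \subseteq \var W \subseteq \var E \vee \var B$ with~\eqref{eqknXmpHle} finishes the argument. Part~\eqref{propgMncmpe} uses only the $0$-separating half analogously.
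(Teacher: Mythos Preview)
Your overall architecture for \eqref{propgMncmpa} and \eqref{propgMncmpb} is the paper's: an ascending chain from $\var M$ built by adjoining one subdirectly irreducible at a time, with the Key Lemma applied at each step. But two of your verification sub-steps have real gaps.

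For condition (a): the claim that ``every proper homomorphic image of $K_s$ is modular'' is not true in general (a proper quotient of some $L_i$ can be $N_5$), and your size-ordering plan for sublattices presumes that any non-modular proper sublattice of $K_s$ is literally one of the lattices on your list, which is neither what you need nor what holds. The paper handles this uniformly and without figure inspection, via the covering relations $\var N_5 \prec \var L_i$ of \eqref{pbxmckjrrGhT} and $\var N_5 \vee \var M_3 \prec \var V_j$ of \eqref{pbxrchsPdspKzg}: if a proper sublattice or quotient $K'$ of $L_i$ were not in $\var N_5$, then $\var N_5 < \var N_5 \vee \HSP\{K'\} \leq \var L_i$ would force equality by the covering, whence $L_i \in \HSs\{K'\}$ by \eqref{pbxBJg}--\eqref{pbxBJktt}, contradicting $|L_i|>|K'|$. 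The upshot is that all proper pieces already lie in $\var W_1 = \mvar N_5$, so no ordering of the chain is needed.

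For condition (b): Figure~\ref{figcc} displays the monolith only for $V_6$, and checking $0$/$1$-separating monoliths will not succeed for $V_1,\dots,V_5,V_8$. The paper's device here is \eqref{pbxcbThcnDzWrtmN}: these six $V_j$ have no three-element generating set, so they satisfy \eqref{pbxmeetcond} and \eqref{pbxjoincond} vacuously. The monolith criterion is reserved for $N_5$ and $L_6,\dots,L_{15}$.

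For \eqref{propgMncmpd} and \eqref{propgMncmpe} the paper takes the simpler route you rejected: it re-applies the Key Lemma along the shifted chain $\var W_s' := \var E \vee \var W_s$, starting at $\var W_j' = \var E$ where $\var W_j = \var B \wedge \var E$. Your finiteness objection to this is fair as the proposition is stated; in the paper's applications one always has $\var E \in \bhhrom$, so $\vfree{\var E}3$ is finite and the issue does not arise. Your unpacked subdirect-product argument is a legitimate workaround for general $\var E$, but you would still need the meet/join-condition branch of the Key Lemma's proof (not only the monolith branch) to dispose of any $V_j$-type factors, for exactly the reason above.
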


\begin{proof} To prove part \eqref{propgMncmpa}, define $\var W_0:=\var M$,
\allowdisplaybreaks{
\begin{align*}
\var W_1&:=\mvar N_5= \HSP(\var W_0\cup\set{N_5}),\cr 
\var W_2&:=\mvar V_8=  \var W_1\vee\mvar V_8=\HSP(\var W_1\cup\set{V_8}),\cr
\var W_3&:=\mvar V_8\vee \mvar L_6=\var W_2\vee \var L_6 = \HSP(\var W_2\cup\set{L_6}),\cr
\var W_4&:=\mvar V_8\vee \mvar L_6\vee \mvar L_7=\var W_3\vee \var L_7 = \HSP(\var W_3\cup\set{L_7}), \dots,
\cr
\var W_{16}&:=\mvar V_8 \vee \bigvee_{i=6}^{15} \mvar L_i \vee\bigvee_{j=1}^4 \mvar V_j = \var W_{15}\vee \var V_4 = \HSP(\var W_{15}\cup\set{V_4})
,\cr
\var W_{17}&:=\mvar V_8 \vee \bigvee_{i=6}^{15} \mvar L_i \vee\bigvee_{j=1}^5 \mvar V_j = \var W_{16}\vee \var V_5 = \HSP(\var W_{16}\cup\set{V_5}).
\end{align*}}%
For later reference, let us point out that the order in the  list $V_8$, $L_6$, \dots, $V_4$, $V_5$ of lattices above is irrelevant in the sense that 
although the sequence $\var W_2$, \dots, $\var W_{17}$ depends on this order, any other order gives rise to a sequence of varieties that makes the rest of the proof work without any essential change.

Clearly, $\var W_0\subseteq \var W_1\subseteq \dots\subseteq\var W_{17}=\var B$. 
As \eqref{pbxcbThcnDzWrtmN} and Figures~\ref{figbb}--\ref{figcc} show, 
\begin{equation}\left.
\parbox{8.3cm}{the monoliths of the lattices $N_5$, $L_6$, $L_7$, \dots, $L_{15}$ occurring above are both 0-separating and 1-separating while $V_8$ and $V_1$, \dots, $V_5$ satisfy both meet condition \eqref{pbxmeetcond} and  join condition \eqref{pbxjoincond}.}\,\,\right\}
\label{pbxczhBmoTrkKZLps}
\end{equation}
All proper sublattices and homomorphic images of $N_5$ belong to $\var D$, so they belong to $\var W_0=\var M$. We claim that for every $i\in\set{1,\dots,15}$ and $j\in\set{1,\dots,8}$,
\begin{align}\left.
\parbox{7.7cm}{if $K$ is a proper homomorphic image or a proper sublattice of $L_i$, then $K\in\var N_5\subseteq\var W_1$;  }
\,\,\right\}
\label{pbxPrshMKrsTa} \\
\left.
\parbox{7.7cm}{if $K$ is a proper homomorphic image or a proper sublattice of $V_j$, then $K\in \var N_5\vee \var M_3\subseteq \mvar N_5=\var W_1$.}\,\,\right\}
\label{pbxPrshMKrsTb}	
\end{align}
Both \eqref{pbxPrshMKrsTa} and \eqref{pbxPrshMKrsTb} could be proved by inspecting lots of straightforward but tiring cases. Fortunately, \eqref{pbxmckjrrGhT} and \eqref{pbxrchsPdspKzg} permit a shorter proof. 
For the sake of contradiction, suppose that $K$ is a proper homomorphic image or a proper sublattice of $L_i$ but $K\notin \var N_5$. 
Note that $|K| < |L_i|$. With $\var X:=\var N_5\vee \HSP\set{K}$,
we have that $\var N_5<\var X$. On  the other hand, $\var N_5\leq \var L_i$  and  $K\in\HSs\set{L_i}\subseteq \HSP\set{L_i}= \var L_i$ give that
$\var X\leq \var L_i$. So $\var N_5<\var X\leq \var L_i$, and we conclude from \eqref{pbxmckjrrGhT} that $\var X=\var L_i$. Since $L_i$ is subdirectly irreducible, $L_i\in \var L_i=\var X=\var N_5\vee \HSP\set{K}$. Using \eqref{pbxBJg}, due to J\'onsson~\cite{jonsson}, we have that $L_i\in \var N_5=\HSP\set{N_5}$ or $L_i\in \HSP\set{K}$.  Hence, by 
\eqref{pbxBJktt}, $L_i\in \HSs\set{N_5}$ or $L_i\in \HSs\set{K}$. This gives that $|L_i|\leq \max\set{|N_5|, |K|}$, which is a contradiction proving \eqref{pbxPrshMKrsTa}.

With less details, the proof of  \eqref{pbxPrshMKrsTb} runs similarly as follows. Suppose that  \eqref{pbxPrshMKrsTb} fails. Pick a  proper homomorphic image or a proper sublattice $K$ of $V_j$ such that $K\notin \var N_5\vee \var M_3$.
With $\var X:= \var N_5\vee \var M_3\vee \HSP\set K$, we have that
$\var N_5\vee \var M_3 <\var X \leq \var V_j$. Using  \eqref{pbxrchsPdspKzg}, we obtain that $\var X =\var V_j$. This leads to 
$V_j\in \var V_j=\var X= (\var N_5\vee \var M_3)\vee \HSP\set K$. By \eqref{pbxBJg} and \eqref{pbxBJktt}, $V_j\in \var N_5\vee \var M_3$ or $V_j\in \HSs\set K$, but the first alternative contradicts  \eqref{pbxrchsPdspKzg} while the second to $|V_j|>|K|$. This yields the validity of  \eqref{pbxPrshMKrsTb}. 
\nothing{The proof of  \eqref{pbxPrshMKrsTb} is almost the same;  the differences are the following. Instead of $L_i$, $\var L_i$, \eqref{pbxmckjrrGhT}, $\var N_5$, and $\set{N_5}$, we need to use $V_j$, $\var V_j$,  \eqref{pbxrchsPdspKzg}, $\var N_5\vee \var M_3$, and $\set{N_5,M_3}$, respectively. Also, instead of $|L_i|\leq \max\set{|N_5|, |K|}$, the contradiction we obtain now is $|V_j|\leq \max\set{|N_5|,|M_3|, |K|}$. Thus, \eqref{pbxPrshMKrsTb} holds.}

Since $\afree{\var M}3$, consisting of 28 elements, is finite, it follows from Lemma~\ref{lemmakdlBRbFnW} that free lattice $\afree{\var W_i}3$ is also finite for every $i\in\set{0,1,\dots,17}$.
Armed with \eqref{pbxczhBmoTrkKZLps}, \eqref{pbxPrshMKrsTa}, and \eqref{pbxPrshMKrsTb},  we can apply Lemma~\ref{keylemma}\eqref{keylemmac} and, at the last step, \eqref{eqknXmpHld} to obtain that  
$\DS(\var B)=\DS(\var W_{17})=\DS(\var W_{16})=\dots =\DS(\var W_{0})=\DS(\var M)=\DS(\var D)$. So $\DS(\var B)=\DS(\var D)$. This together with  $\var D\leq \var W$ and  \eqref{eqknXmpHle} prove part \eqref{propgMncmpa}.

The monolith of $L_2$ and that of $V_7$ are 0-separating. Using these two lattices in the same way as the earlier ones, we can continue the sequence $\var W_0,\dots \var W_{17}$ with
$\var W_{18}=\HSP(\var W_{17}\cup\set{L_2})$ and 
$\var B'=\var W_{19}=\HSP(\var W_{18}\cup\set{V_7})$. Now, instead of Lemma~\ref{keylemma}\eqref{keylemmac}, we can apply  Lemma~\ref{keylemma}\eqref{keylemmaa}. In this way, we obtain the validity of part \eqref{propgMncmpb} in the same way as that of  \eqref{propgMncmpa}.

Next, to prove part  \eqref{propgMncmpd}, assume that $\var E\geq\mvar N_5$.
We can even assume that $\var E > \mvar N_5$ since otherwise the already proven  part \eqref{propgMncmpa}  would apply.
Let $\var F:=\var B\wedge \var E$, and observe that $\var F$ belongs to the interval $[\mvar N_5, \var B]$ of $\alllat$. 
 It follows from \eqref{eqtxtBnrVl} or, rather say, from Proposition~\ref{prophszHfd}\eqref{prophszHfdd}  that $\var F\in \bhhrom$.  Hence, $\var F$ is the join of $\mvar N_5$ and some of the varieties belonging to  the set $\set{\mvar V_8,\mvar L_6,\dots, \mvar V_5}$ that occur in the definition of $\var B$ in \eqref{eqthmgPBMlPcsPdlha}. Therefore, based on the sentence following the definition of $\var W_{17}$, we can assume that $\var F=\var W_j$ for some $j\in\set{1,\dots, 17}$. 
To ease the notation, we let $j=2$; the general case is the practically the same. Take the sequence $\var W_2':=\var E\vee \var W_2=\var E\vee \var F=\var E$, \ 
$\var W_3':=\var E\vee \var W_3=\HSP(\var W'_2\cup\set{L_6})$, \ 
$\var W_4':=\var E\vee \var W_4=\HSP(\var W'_3\cup\set{L_7})$, \ 
$\var W_5':=\var E\vee \var W_5=\HSP(\var W'_4\cup\set{L_8})$, \dots,
$\var W_{17}':=\var E\vee \var W_{17}=\HSP(\var W'_{16}\cup\set{V_5})$. 
Since $\var W_{17}=\var B$, our sequence terminates with $\var W'_{17}=\var E\vee \var B$. The argument used in \eqref{propgMncmpa} applies for the sequence 
$\var W_2'$, \dots, $\var W_{17}'$ and yields that 
$\DS(\var E\vee \var B)=\DS(\var W'_{17})=\DS(\var W'_{16})=\dots =\DS(\var W'_{2})=\DS(\var E)$. Hence, applying \eqref{eqknXmpHle}, we obtain the validity of part  \eqref{propgMncmpd}. 

Finally, the proof of part \eqref{propgMncmpe} is obtained by modifying that of 
 \eqref{propgMncmpd} in the same straightforward way as we modified the proof of part  \eqref{propgMncmpa} to obtain that of  \eqref{propgMncmpb}. 
The proof of Proposition~\ref{propgMncmp} is complete.
\end{proof}

\begin{remark}\label{rem:hnHblGbrTnhSprc}  
The rudiments of the theory of distributive lattices yield that the interval $[\mvar N_5, \var B']$ of $\alllat$ is (isomorphic to) the $2^{18}$-element boolean lattice; see also the proof of Proposition~\ref{prophszHfd}\eqref{prophszHfdd}.  Therefore,  Proposition~\ref{propgMncmp}\eqref{propgMncmpb} extends the scope of \eqref{eqknXmpHla} from Cz\'edli~\cite{czgnumatoms} with $2^{18}$ new lattice varieties. 
Similarly, Proposition~\ref{propgMncmp}\eqref{propgMncmpa} adds $2^{16}$ new lattice varieties to the scope of the previously known \eqref{eqknXmpHld}.
\end{remark}

Next, we conclude this section by exceeding \eqref{eqknXmpHlb}, an earlier result.  In the direct cube of $L_4=\dual{P_0}$, see Figure~\ref{figbb}, let 
\begin{equation}
\vx:=(c,a,a),\quad\vy=(a,c,b),\quad \vz=(b,b,c),
\end{equation}
and let $L_{92}$ be the sublattice of the direct cube ${L_4}^3$ generated by $\set{\vx, \vy, \vz}$. In fact, $L_{92}$ is a subdirect power of $L_4$. We are in the position to present the following observation; its proof will be given after a remark.

\begin{observation}\label{observ92}
The lattice $L_{92}$ is three-generated and it has at least six atoms.
\end{observation}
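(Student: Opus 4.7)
The three-generated assertion is immediate from the definition of $L_{92}$ as the sublattice of $L_4^3$ generated by the three-element set $\set{\vx, \vy, \vz}$; so the real content is the lower bound on the number of atoms, and the plan is to exhibit six concrete elements of $L_{92}$ and show each of them covers the bottom.

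The first step is to identify the bottom of $L_{92}$. Reading off from Figure~\ref{figbb} that $a$, $b$, $c$ are atoms of $L_4 = \dual{P_0}$ with $a\wedge b = a\wedge c = b\wedge c = 0$, I would compute the three pairwise meets coordinatewise:
\begin{align*}
\vx\wedge\vy &= (c\wedge a,\,a\wedge c,\,a\wedge b) = (0,0,0),\\
\vx\wedge\vz &= (c\wedge b,\,a\wedge b,\,a\wedge c) = (0,0,0),\\
\vy\wedge\vz &= (a\wedge b,\,c\wedge b,\,b\wedge c) = (0,0,0).
\end{align*}
Hence $(0,0,0)\in L_{92}$, and this is clearly the least element of $L_{92}$.

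The next step is to exhibit six elements of $L_{92}$ of the form ``a single non-zero coordinate equal to an atom of $L_4$'' and to recognize them as atoms of $L_{92}$. As first-round building blocks I would compute the three intermediate terms $\vx\wedge(\vy\vee\vz)$, $\vy\wedge(\vx\vee\vz)$, $\vz\wedge(\vx\vee\vy)$ using the join structure of $L_4$ read off from Figure~\ref{figbb}; typically each of these has exactly two non-zero coordinates. I would then iterate, combining these intermediates with the generators via further meets and joins, so as to strip off one more coordinate at a time, and thereby produce six distinct elements of $L_{92}$ each having exactly one non-zero coordinate, equal to one of the atoms $a$, $b$, $c$ of $L_4$ (the choices being constrained by which combinations appear in the coordinates of $\vx$, $\vy$, $\vz$). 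Once such an element $e$ of $L_{92}$ is on the table, the fact that it is an atom is automatic: any element of $L_4^3$ strictly below $e$ above $(0,0,0)$ would have to have the same single non-zero coordinate strictly below the corresponding atom of $L_4$, which forces it to equal $(0,0,0)$. So every element of $L_{92}$ lying in the interval $[(0,0,0),e]$ is $(0,0,0)$ or $e$, and $e$ is an atom.

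The main obstacle is purely computational: selecting the right lattice terms to isolate each of the six single-coordinate atoms. The asymmetric pattern of the generators $\vx=(c,a,a)$, $\vy=(a,c,b)$, $\vz=(b,b,c)$ has only a partial (and not full) symmetry in the three coordinates, so the six atoms cannot all be obtained from a single term by symmetric relabelling; they have to be produced individually and compared to confirm that they are pairwise distinct. Once the six terms are written down and their coordinate values verified using the table of meets and joins of $L_4$, the conclusion that $|\Ats{L_{92}}|\ge 6$ follows.
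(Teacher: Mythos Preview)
Your proposal is correct and follows essentially the same route as the paper: compute the three intermediate elements $\vx\wedge(\vy\vee\vz)$, $\vy\wedge(\vx\vee\vz)$, $\vz\wedge(\vx\vee\vy)$, then combine these with the generators via further meets and joins to isolate six elements each having a single nonzero coordinate equal to an atom of $L_4$, and conclude that such elements are atoms of $L_{92}$ because they are already atoms of the ambient direct cube $L_4^3$. The paper writes out the six explicit terms (obtaining $a00$, $0a0$, $00a$, $b00$, $0b0$, $00b$), while you leave this as a plan, but the strategy is identical.
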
 

\begin{remark}
The proof to be given soon has also been checked by a computer program.
Hence, we know that  $|L_{92}|=92$, explaining the notation, and $L_{92}$  has \emph{exactly} six atoms.   
\end{remark}

\begin{proof}[Proof of Observation~\ref{observ92}] For
brevity, we write triples without commas and parentheses; for example, $caa$ stands for $\vx=(c,a,a)$. Let us compute:
\begin{align}
0aa&=\vx\wedge d11= \vx\wedge (\vy\vee\vz)\in  L_{92},\cr
a0b&=\vy\wedge 1d1=\vy\wedge(\vx\vee \vz)\in  L_{92},\cr
bb0&=\vz\wedge 11d = \vz\wedge (\vx\vee \vy)\in  L_{92},\cr
a00&=\vy\wedge 1da=\vy\wedge(\vx\vee bb0)\in L_{92},\label{eqztmDhQa}\\
0a0&=\vx\wedge d1b=\vx\wedge(\vy\vee bb0)\in L_{92},\label{eqztmDhQb}\\
00a&=\vx\wedge db1=\vx\wedge (\vz\vee a0b)\in L_{92},\label{eqztmDhQc}\\
b00&=\vz\wedge 1ad =\vz\wedge (\vx\vee a0b)\in L_{92},\label{eqztmDhQd}\\
0b0&=\vz\wedge a1d =\vz\wedge (\vy \vee 0aa)\in L_{92},\label{eqztmDhQe}\\
00b&=\vy\wedge bd1 =\vy\wedge (\vz\vee 0aa)\in L_{92}.\label{eqztmDhQf} 
\end{align}
Since the elements \eqref{eqztmDhQa}--\eqref{eqztmDhQf} are atoms even in the direct cube ${L_{4}}^3$, they are also atoms in $L_{92}$.
\end{proof}

Note that Observation~\ref{observ92} explains why Lemma~\ref{keylemma}\eqref{keylemmaa} contains the stipulation that $K$ should satisfy meet condition \eqref{pbxmeetcond} or its monolith should be 0-separating. Also, the definition of $L_{92}$ together with Observation~\ref{observ92} show that $L_4$ cannot occur among the joinands in \eqref{eqthmgPBMlPcsPdlha}.

\section{More about the spectra of varieties belonging to $\bhhrom$}\label{sectionmorabout}
%changed: Many changes occur here, down to the Main Theorem, both in the text  and in some formulas.
Recall that $\bhhrom$ and some  related notations are defined in \eqref{pbxbshrBhrm}, \eqref{pbxthTlvRhrM}, and \eqref{pbxmskSgnKa}. Here we are going to have a closer look at the structure of $\bhhrom$.
The sixteen vertices in  Figure~\ref{figdd} form a sublattice of $\bhhrom$. The solid line segments in the figure denote coverings in $\bhhrom$ while the dotted and dashed line segments stand for intervals having  more than two elements in $\bhhrom$. 
By Proposition~\ref{prophszHfd}\eqref{prophszHfdd}, $\bhhrom$ is an interval of $\alllat$, 
\begin{align}
&\Ats{\bhhrom}:=\set{\mvar L_1,\dots,\mvar L_{15}}\cup\set{\mvar V_1,\dots, \mvar V_8}, \,\, \text{ and}
\label{eqmnFkTrgwsPmZldSt}\\
&\left.\parbox{7.2cm}{each element of $\bhhrom\setminus\set{\mvar N_5}$ can be given as the join of a unique nonempty subset of $\Ats{\bhhrom}$.}\,\,\right\}
\label{eq:lnMgtDfmtHLr}
\end{align}
Equivalently, a lattice variety $\var X\in \bhhrom$ is uniquely determined by its intersection with  $\set{L_1,\dots, L_{15}}\cup\set{V_1,\dots, V_8}$. 
In harmony with Figure~\ref{figdd}, we let
\begin{equation}
\mvar H_3:=\mvar L_1\vee\mvar L_5\vee \mvar V_6\qquad\text{ and }\qquad \mvar H_6:=\mvar H_3\vee \mvar L_3\vee \mvar L_4.
\label{eqzMywRdlWtzls}
\end{equation}
The interval $[\mvar N_5,\mvar H_3]$ is isomorphic to the eight-element boolean lattice. So are seven other intervals in the figure that are transposed to  $[\mvar N_5,\mvar H_3]$; these eight intervals are drawn  by 
dotted northwest--southeast oriented lines segments 
 and we call them \emph{dotted intervals}.  
 The interval  $[\mvar N_5,\mvar H_6]$ is green-filled in Figure~\ref{figdd};  we call it the \emph{green  interval}. The green interval is  
isomorphic to the 32-element boolean lattice. Furthermore,  it is the union of four pairwise disjoint  dotted intervals:  
\begin{equation}[\mvar N_5,\mvar H_6]=
[\mvar N_5,\mvar H_3]\mathrel{\dot\cup}[\mvar L_3,\mvar L_3\vee \mvar H_3] \mathrel{\dot\cup} [\mvar L_4,\mvar L_4\vee \mvar H_3] \mathrel{\dot\cup} [\mvar L_3\vee \mvar L_4, \mvar H_6];
\label{eqVlzrbTssGma}
\end{equation} 
note that, in this paper,   $\dot\cup$ means that we form the union of \emph{pairwise disjoint} sets. 
The interval $[\mvar N_5,\var B']$ and seven other intervals transposed to it are drawn by dashed line segments; we call them \emph{dashed intervals}. As in Remark \ref{rem:hnHblGbrTnhSprc}, we can see that each dashed interval is a boolean lattice of size $2^{18}$.
Note that, by \eqref{eq:lnMgtDfmtHLr}, 
\begin{equation}\left.
\parbox{9.8 cm}{every member of $\bhhrom$ can uniquely be written in the form $\var X\vee\var Y$ such that $\var X$ is in the green interval and $\var Y\in [\mvar N_5,\var B']$.}\right\}
\label{eq:zlnkgchmZbrJng} 
\end{equation}
In fact, 
 $\bhhrom\cong  [\mvar N_5,\mvar H_6] \times  [\mvar N_5,\var B']$.     We have that 
\begin{equation}\left.
\begin{aligned}
\bhhrom={}
&
[\mvar N_5,\mvar H_3\vee \var B']   \mathrel{\dot\cup} 
[\mvar L_3,\mvar H_3\vee \mvar L_3\vee \var B']  \cr
 \mathrel{\dot\cup}{} &
[\mvar L_4,\mvar H_3\vee \mvar L_4\vee \var B'] \mathrel{\dot\cup}
[\mvar L_3\vee\mvar L_4,\mvar H_6\vee \var B'].
\end{aligned}\,\,\right\}
\label{eqVlzrbTssGmb}
\end{equation}
According to Figure~\ref{figdd},  the four intervals occurring in \eqref{eqVlzrbTssGmb} are called the \emph{layers} of $\bhhrom$. They are $2^{21}$-element boolean lattices, and each of them has its own fill pattern in the figure. Hence, as  the column ``Notation'' on the right of the figure indicates, each layer is colored by one of the numbers 3, 4, and 6. Namely, in the order they occur in \eqref{eqVlzrbTssGmb}, the layers are 
3-colored, 4-colored, 6-colored, and 6-colored, respectively.  
We define the color of a variety $\var X\in \bhhrom$ as follows: 
\begin{equation}
\text{the \emph{color} of $\var X\in\bhhrom$ is the color of the layer containing  $\var X$;}
\label{pbxZsgRmszTLkPk}
\end{equation} 
it follows from  \eqref{eqVlzrbTssGmb} that the color of $\var X$ is uniquely defined. Since we have also defined the colors with reference to \eqref{eqVlzrbTssGmb}, it is worth noting that the color of  $\var X\in\bhhrom$ has also been defined without referring to Figure~\ref{figdd}.

Now, keeping Figure~\ref{figdd} and the notations \eqref{eqlHzfQa}--\eqref{eqlHzfQc}, \eqref{pbxbshrBhrm},  \eqref{pbxmskSgnKa}, \eqref{eqthmgPBMlPcsPdlha}, and \eqref{eqzMywRdlWtzls}--\eqref{pbxZsgRmszTLkPk} in mind and introducing the notations
\begin{align}
\delta(3)&:=\{(1,1),(1,2), (2,1), (2,2), (2,3), (3,2), (3,3)\},\\
\delta(4)&:=\delta(3)\cup\set{(4,3), (3,4), (4,2), (2,4)},\\
\delta(6)&:=\delta(3)\cup\set{(4,2), (4,3), (6,3)},\text{ and}\\
\delta(6)^{-1}&:=\delta(3)\cup\set{ (2,4), (3,4), (3,6)},
\end{align}
we are in the position to formulate the main result of the present paper.

\begin{theorem}[Main Theorem]\label{thmmain}
Let $\var X$ be a lattice variety belonging to $\bhhrom$.
\begin{enumeratei}
\item\label{thmmaina} 
If the color of $\var X$, with respect to \eqref{pbxZsgRmszTLkPk} is $3$, $4$, or $6$, then the atom spectrum $\AS(\var X)$ of $\var X$ is $\set{1,2,3}$,  $\set{1,2,3,4}$, and $\set{1,2,3,4,6}$, respectively.  
\item\label{thmmainb} 
If $\var X$ belongs to one of the following intervals, then its double spectrum is given by the table below.
\[
%\lower  0.8 cm
\vbox{\tabskip=0pt\offinterlineskip
\halign{\strut#&\vrule#\tabskip=2pt plus 2pt&
#\hfill& \vrule\vrule\vrule#&
%\hfill#&\vrule#&
\hfill#&\vrule\tabskip=0.1pt#&
#\hfill\vrule\vrule\cr
\vonal\vonal\vonal\vonal
&&\hfill If $\var X$ belongs to the interval below,&&then $\DS{(\var X)}$ is\hfill&
\cr\vonal\vonal
&&\hfill$[\mvar N_5,\var B]$&&$\delta(3)$\hfill&
\cr\vonal\vonal
&&\hfill$[\mvar V_6,\mvar V_6\vee \var B]$ or $[\mvar V_7,\mvar V_7\vee \var B]$ &&$\delta(3)$\hfill&
\cr\vonal\vonal
&&\hfill$[\mvar L_1,\mvar L_1\vee \var B]$ or $[\mvar L_2,\mvar L_2\vee \var B]$&&$\delta(3)$\hfill&
\cr\vonal\vonal
&&\hfill$[\mvar L_3,\mvar L_3\vee \var B]$&&$\delta(4)$\hfill&
\cr\vonal\vonal
&&\hfill$[\mvar L_4,\mvar L_4\vee \var B]$&&$\delta(6)$\hfill&
\cr\vonal\vonal
&&\hfill$[\mvar L_5,\mvar L_5\vee \var B]$&&$\delta(6)^{-1}$\hfill&
\cr\vonal\vonal
\vonal\vonal
}} 
%\label{cimke}
\]
\end{enumeratei}
\end{theorem}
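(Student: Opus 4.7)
My strategy is to reduce the theorem to a small list of ``base cases''---namely, the double spectrum (for part (ii)) and the atom spectrum (for part (i)) at finitely many small varieties---and then to extend these by the constancy results already established in Proposition~\ref{propgMncmp}.

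\textbf{Part (ii).} For each interval $[\var E, \var E \vee \var B]$ listed in the table, Proposition~\ref{propgMncmp}\eqref{propgMncmpd} yields $\DS(\var W) = \DS(\var E)$ throughout that interval.  This reduces the task to computing $\DS$ at the eight bases $\mvar N_5$, $\mvar V_6, \mvar V_7$, $\mvar L_1, \mvar L_2$, $\mvar L_3, \mvar L_4, \mvar L_5$.  The case $\var E = \mvar N_5$ falls under Proposition~\ref{propgMncmp}\eqref{propgMncmpa} (since $\mvar N_5 \subseteq \var B$), giving $\delta(3)$; and the case $\var E = \mvar L_5$ follows from $\var E = \mvar L_4$ by duality, because $L_5 = \dual{L_4}$ in Figure~\ref{figbb}.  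For the six remaining bases I would apply Lemma~\ref{lemmacZTkglnTxp}: every three-generated $L$ in $\mvar L_i = \var L_i \vee \var M$ (respectively in $\mvar V_j = \var V_j \vee \var M$) is a finite irredundant subdirect product of three-generated subdirectly irreducible factors, and by \eqref{pbxBJg} and \eqref{pbxBJktt} each such factor lies in $\HSs\set{L_i}$ (respectively $\HSs\set{V_j}$) or is a three-generated modular subdirectly irreducible lattice.  Going case by case on the factor combinations---constrained by Remark~\ref{remarkZfhrBmW} and the baseline $\DS(\var M) = \delta(3)$ of \eqref{eqknXmpHld}---yields each spectrum; the pair $(6,3) \in \DS(\mvar L_4)$ is already witnessed by Observation~\ref{observ92}.

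\textbf{Part (i).} Proposition~\ref{propgMncmp}\eqref{propgMncmpe} makes $\AS$ constant on $[\var E, \var E \vee \var B']$ for every $\var E \geq \mvar N_5$.  Since the five atoms of $\bhhrom$ not absorbed in $\var B'$ are exactly $\mvar L_1, \mvar L_3, \mvar L_4, \mvar L_5, \mvar V_6$, and since $\bhhrom$ is Boolean by Proposition~\ref{prophszHfd}\eqref{prophszHfdd}, every $\var X \in \bhhrom$ decomposes uniquely as $\var X = \var E \vee \var F$ with $\var F \leq \var B'$ and $\var E$ in the $2^5$-element interval $[\mvar N_5, \mvar H_6]$; consequently $\AS(\var X) = \AS(\var E)$.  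The job thus reduces to the $32$ varieties of $[\mvar N_5, \mvar H_6]$.  After verifying (by inspection of Figures~\ref{figbb}--\ref{figcc}) that $L_1$, $L_5$, and $V_6$ each satisfy meet condition~\eqref{pbxmeetcond} (or that their monoliths are $0$-separating), I can iteratively apply Lemma~\ref{keylemma}\eqref{keylemmaa} with $K \in \set{L_1, L_5, V_6}$ to strip these ``atom-preserving'' atoms without changing $\AS$.  This reduces further to the four bases $\mvar N_5, \mvar L_3, \mvar L_4, \mvar L_3 \vee \mvar L_4$, whose atom spectra $\set{1,2,3}, \set{1,2,3,4}, \set{1,2,3,4,6}, \set{1,2,3,4,6}$ correspond to the colors $3, 4, 6, 6$ indicated in Figure~\ref{figdd} and are read off by projecting the first coordinates of the $\DS$ computed in Part~(ii) (the variety $\mvar L_3 \vee \mvar L_4$ requiring a short extra argument that no new atom count appears).

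\textbf{Main obstacle.} The heaviest work is the combinatorial case analysis that establishes $\DS(\mvar L_3) = \delta(4)$ and $\DS(\mvar L_4) = \delta(6)$, together with the three-element outcomes $\DS(\mvar V_6) = \DS(\mvar V_7) = \DS(\mvar L_1) = \DS(\mvar L_2) = \delta(3)$.  The \emph{constructive} direction is helped by Observation~\ref{observ92} and by explicit three-generated sublattices of low-dimensional subdirect powers of the $L_i$ and $V_j$; the \emph{restrictive} direction---excluding, say, $(5,k)$ or $(4,4)$ for $\mvar L_4$, and the analogous excessive pairs for $\mvar L_3$---requires careful tracking of the meet/join structure of a three-element generating set via Remark~\ref{remarkZfhrBmW}, and is the most technical step in the whole argument.
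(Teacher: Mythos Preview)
Your high–level reduction scheme in both parts is exactly right and matches the paper: Proposition~\ref{propgMncmp}\eqref{propgMncmpd}–\eqref{propgMncmpe} collapse each long interval, the Boolean decomposition $\var X=(\var X\wedge\mvar H_6)\vee(\var X\wedge\var B')$ reduces Part~\eqref{thmmaina} to the $32$-element interval $[\mvar N_5,\mvar H_6]$, and then the sandwich principle \eqref{eqknXmpHle} lets one work with the eight ``vertex'' varieties only. Where your plan breaks is the next step.

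\textbf{The false stripping claim.} You propose to peel off $L_1$, $L_5$, $V_6$ via Lemma~\ref{keylemma}\eqref{keylemmaa}, asserting that each of these satisfies meet condition~\eqref{pbxmeetcond} or has $0$-separating monolith. This is not true: these three lattices are \emph{precisely} the ones excluded from $\var B'$ because they fail \emph{both} hypotheses of Lemma~\ref{keylemma}\eqref{keylemmaa}. For instance, in $L_5=\dual{L_4}$ one finds a three-element generating set with two pairwise meets equal to $0$ (dualize the generators $a,b,c$ of $L_4$ used in Observation~\ref{observ92}), and the monolith of $L_5$ has a nontrivial $0$-block (dualize the $\{c,1\}$-block of $\mu_{L_4}$). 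The paper confirms this explicitly in the paragraph following Remark~\ref{remarkHqrhhqxG}: for $\var W\in[\mvar N_5,\mvar H_3]$ with $\var W>\mvar N_5$, ``Lemma~\ref{keylemma} is not applicable.'' Consequently your reduction from $32$ base cases to $4$ collapses, and the atom spectra of $\mvar H_3$, $\mvar L_3\vee\mvar H_3$, $\mvar L_4\vee\mvar H_3$, $\mvar H_6$ must be obtained directly.

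\textbf{The infeasible case analysis.} For the base computations in both parts you appeal to a ``combinatorial case analysis'' over irredundant subdirect products. The paper carries these out by a dedicated computer program: even after exploiting Remark~\ref{remarkZfhrBmW}, computing $\AS(\mvar H_6)$ means ranging over $2^{17}-1$ subsets of assignments (about five minutes of machine time), while $\DS(\mvar L_3)$---where \eqref{eqdHtrsbmDnMsb} is unavailable---took three and a half hours. The direct products involved have sizes around $10^{20}$; see \eqref{pbxlScskNts}--\eqref{eqtZrhNSzsrg}. A hand proof along the lines you sketch (in particular the exclusion of $(5,k)$ and $(4,4)$ for $\mvar L_4$) is not realistically completable, and the paper does not attempt one. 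Your proposal correctly identifies this as ``the heaviest work,'' but treating it as a routine case check severely underestimates what is required.
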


Before the proof, some remarks are appropriate here. Surprisingly, while 6 belongs to the atom spectrum of some $\var X\in\bhhrom$, the number  $5$ does not. The ``coatom spectrum counterpart'' of part \eqref{thmmaina}, which we do not formulate in the present paper, would follows easily by duality. The intervals listed in part \eqref{thmmainb} are pairwise disjoint. However, their union is much smaller than $\bhhrom$. Actually, this union consists of $2^{21}$ elements, whereby part \eqref{thmmainb} takes care only a quarter of lattice varieties belonging to $\bhhrom$.

\begin{proof}[Proof of Theorem~\ref{thmmain}] 
In order to prove part \eqref{thmmaina}, it suffices to prove that 
\begin{equation}\left.
\parbox{8cm}{the atom spectra of the eight members of 
the green interval $[\mvar N_5, \mvar H_6]$ that are indicated by vertices in Figure~\ref{figdd} are the same as stated in the theorem.}\,\,\right\}
\label{pbxbNlCsKrldFzZGh}
\end{equation}
Indeed, if \eqref{pbxbNlCsKrldFzZGh} held, then \eqref{eqknXmpHle} and \eqref{eqVlzrbTssGma} would yield the validity of part \eqref{thmmaina} for all the 32 varieties belonging to the green interval $[\mvar N_5, \mvar H_6]$. Thus, 
it would follow from
\eqref{eq:zlnkgchmZbrJng} and 
 and Proposition~\ref{propgMncmp}\eqref{propgMncmpe} 
that any two varieties belonging to the same 
layer have the same atom spectrum, whereby \eqref{pbxbNlCsKrldFzZGh} would  imply the validity of part \eqref{thmmaina}.

We already know from Proposition~\ref{propgMncmp}\eqref{propgMncmpb}  that part  \eqref{thmmaina} of Theorem~\ref{thmmain} describes $\AS(\mvar N_5)$ correctly. So the job for one of the eight varieties mentioned in \eqref{pbxbNlCsKrldFzZGh} is done.  For the seven other varieties, both theoretical considerations and the brutal force of a computer are needed. We give the theoretical consideration only for $\mvar H_6$ since the rest of the seven varieties can be treated in an analogous but easier way. (In fact, we do not have to deal with all of them since \eqref{eqknXmpHle} applies in some cases.)
We claim that, up to isomorphism, 
\begin{equation}\left.
\parbox{9cm}{the set $\thrSi(\mvar H_6)$ of at most three-generated subdirectly irreducible lattices of $\mvar H_6$ is   $\set{\chain2, M_3, N_5, L_1, L_5, V_6, L_3, L_4}$.}\,\,\right\}
\label{pbxGrPfWtTwrQ}
\end{equation}
(Here ``up to isomorphism'' means that $\thrSi(\mvar H_6)$ is actually the set of \emph{isomorphism types} of the class of the at most three-generated subdirectly irreducible lattices of $\mvar H_6$, but it will be more convenient to work with this eight-element set than a proper class.) 
To show the validity of \eqref{pbxGrPfWtTwrQ}, let $K\in\mvar H_6$ be a subdirectly irreducible lattice generated by at most three elements. By \eqref{pbxmskSgnKa},  \eqref{eqzMywRdlWtzls}, and  Bjarni J\'onsson's result \eqref{pbxBJg},
\begin{equation}
\begin{aligned}
K\in \Si(\var M)&\cup\Si(\HSP\set{L_1}) \cup\Si(\HSP\set{L_5}) \cr
&\cup\Si(\HSP\set{V_6}) \cup\Si(\HSP\set{L_3})  \cup\Si(\HSP\set{L_4}).
\end{aligned}
\label{eqsizGrsrBkDt}
\end{equation} 
According to \eqref{eqsizGrsrBkDt}, the argument splits into three cases.
First, if $K\in \Si(\var M)$, then $K\in\set{\chain2,M_3}$ and \eqref{pbxGrPfWtTwrQ} is clear. Second, assume that  $K\in \Si(\HSP\set{V_6})$. Then  \eqref{pbxBJktt} gives that 
$K\in \HSs\set{V_6}$. We can assume that $K\not\cong V_6$ since otherwise  \eqref{pbxGrPfWtTwrQ} is clear. Hence, $K\in\HSP\set{K'}$ for a proper homomorphic image or  proper sublattice $K'$ of $V_6$.  
The lattice $K'$ is nontrivial since $K\in\HSP\set{K'}$  and $K$, being subdirectly irreducible, has at least two elements. Hence, $\var D\leq \HSP\set{K'}$ and so $\HSP\set{K'}\in [\var D, \var V_6]$. Using \eqref{pbxrchsPdspKzg}, the semimodularity of $\alllat$, and the obvious coverings $\var D\prec \var M_3$ and $\var D\prec \var N_5$, we conclude easily that the interval $[\var D, \var V_6]$ is of length 3. Clearly, $V_6\notin \HSs\set{K'}$ since $|V_6|>|K'|$. Applying   \eqref{pbxBJktt} again, we obtain that $V_6\notin \HSP\set{K'}$. Hence, 
$\HSP\set{K'}<\HSP\set{V_6}=\var V_6$. Using that $\var V_6$ is a join-irreducible cover of $\var M_3\vee \var N_5$ by \eqref{pbxrchsPdspKzg} and that both $\HSP\set{K'}$ and $\var M_3\vee \var N_5$  are in the interval $[\var D, \var V_6]$ of (finite) length 3,  we conclude that $\HSP\set{K'}\leq \var M_3\vee \var N_5$.
So $K\in \var M_3\vee \var N_5=\HSP \set{M_3}\vee \HSP \set {N_5}$, and the validity of  \eqref{pbxGrPfWtTwrQ} in this case follows easily by  
 \eqref{pbxBJg} and  \eqref{pbxBJktt}.
Third, assume that $K\in \Si(\HSP\set{L_i})$ for some $i\in\set{1,5,3,4}$. In the same way as in the second case above but using \eqref{pbxmckjrrGhT} and 
$[\var D, \var L_i]$ (of length 2) instead of \eqref{pbxrchsPdspKzg} and $[\var D, \var V_6]$, we obtain that $K\not\cong L_i$ implies that 
$K\in\HSP\set{N_5}$, and 
and the validity of  \eqref{pbxGrPfWtTwrQ} in this case follows immediately from \eqref{pbxBJktt}. This completes the proof of \eqref{pbxGrPfWtTwrQ}.

Next, still only focusing on $\mvar H_6$, we begin to season the theoretical consideration by computational aspects.
Let $L\in\mvar H_6$ be a three-generated lattice and fix a three-element generating set $\set{x,y,z}$ of $L$. It follows from Lemma~\ref{lemmacZTkglnTxp} that, in the sense of \eqref{pbxRsPchtmrGrthH} and up to isomorphism, $L$ is an irredundant subdirect product of a system of subdirectly irreducible lattices taken from  $\thrSi(\mvar H_6)$, which is described by \eqref{pbxGrPfWtTwrQ}. Even if we only allow irredundant subdirect products, a lattice from $\thrSi(\mvar H_6)$ can occur, with different assignments of the generators, more than once in the product.  Let us determine what is the maximum number of factors in an
irredundant subdirect product if multiplicities are counted. 
Since $\chain 2$ has no nontrivial automorphism and since singleton factors can be disregarded, there are six ways to pick a triplet in $\chain 2^3$ with components generating $\chain 2$. Hence, six copies of $\chain 2$ are needed.  We need only one copy of $M_3$ due to its large automorphism group $\Aut(M_3)$. 
Using that $|\Aut(N_5)|=1$, $|\Aut(L_1)|=2$, 
$|\Aut(L_5)|=2$, $|\Aut(V_6)|=2$, $|\Aut(L_3)|=1$, and $|\Aut(L_4)|=2$, 
we need $3=6/2$ copies of each of $L_1$, $L_5$, $V_6$, and $L_4$ but we need 6 copies of $N_5$ and the same number of copies of $L_3$. Hence, unless no reduction was found,
\begin{equation}\left.
\parbox{8.1cm}{we would have to work in a direct product of $6+1+4\cdot 3+6 +6=31$ factors and the size of this direct product would be $p_1:=2^6\cdot 5\cdot 7^3\cdot 6^3 \cdot 9^3 \cdot 6^3 \cdot 7^6 \cdot 5^6= 6\,862\,579\,602\,459\,840\,000\,000\approx 6.86\cdot 10^{21}$.}\,\,\right\}
\label{pbxlScskNts}
\end{equation} 
The number $6.86\cdot 10^{21}$ is too large, and what is also too bad is that we would have to take nonempty subset of the set of 31 factors in all possible ways, that is, in $2^{31}-1$ ways. That much computation is not feasible. In \eqref{pbxlScskNts}, we have taken into account only those criticizing homomorphisms of \eqref{pbxRsPchtmrGrthH} that are isomorphisms. Those that are not isomorphisms give some reduction but not enough. Typically, only $\chain2$ is a homomorphic image of another factor but even all the six copies of $\chain2$ were excluded, still 25 factors would remain, the largest direct product would be of size 
\begin{equation}
\text{$p_1\cdot 2^{-6}\approx 1.07\cdot 10^{20}$, and $2^{25}-1=33\,554\,431$}\label{eqtZrhNSzsrg}
\end{equation}  
products should be investigated, which would not be feasible.  Indeed, it is not sufficient to decide which subdirect products are needed, those that are needed have to be  constructed. 
If we  deal with double spectra, then we do not know further ways of reducing computations; this explains that Theorem~\ref{thmmain}\eqref{thmmainb} only deals  with a quarter of $\bhhrom$. 

If we deal with atomic spectra, then we are lucky enough to disregard  several subdirect factors by \eqref{eqdHtrsbmDnMsb}. For example, if $N_5$ is the $i$-th subdirect factor, then we can stipulate that $z(i)=c$, see Figure~\ref{figaa} for the notation of the elements of $N_5$, because otherwise   \eqref{eqdHtrsbmDnMsb}  would be violated.
Hence, instead of six, only two copies of $N_5$ are sufficient in the list of subdirect factors, and the same holds for $L_3$ and $V_6$. 
(In fact, one copy of $V_6$ is sufficient since the other one was excluded when calculating \eqref{pbxlScskNts}.)

To do the hard computations, we have written a computer program in
Bloodshed \emph{Dev-Pascal} v1.9.2 (Freepascal) under Windows 10 operating system, which is available from the author's website. This program takes its input from two distinct text files. The first file contains the operation tables of the possible subdirectly irreducible factors. (We have also written an auxiliary program that produces this file from the covering graphs of the irreducible factors.)
With reference to the first file, the second file gives the assignments of the variables and it can also give  constraints. 

The list of \emph{assignments} only takes care a part of irredundancy, see \eqref{pbxRsPchtmrGrthH} and Lemma~\ref{lemmacZTkglnTxp}. Namely, criticizing isomorphisms among distinct factors are excluded by the list of these assignments but non-bijective  criticizing homomorphisms are permitted. If the atom spectrum is targeted, then \eqref{eqdHtrsbmDnMsb} 
is also taken care of by the list of assignments: only those assignments occur in the list that obey  \eqref{eqdHtrsbmDnMsb}.
The assignments are given in so-called \emph{assignment lines}. 
These lines  follow strict syntactical rules but they are self-explanatory to read by humans. For example, if the third assignment line is\\
\verb!\lattice=N5 \with x=b y=a z=c!\\
then the program assumes that $N_5$ is the third subdirectly irreducible factor and lets $x(3):=b$, $y(3):=a$, and $z(3):=c$. 

The purpose of a \emph{constraint} is to instruct the program to take a non-bijective criticizing homomorphism into account. 
The constraints, if any, are given by so-called  \emph{constraint lines}. Constraint lines are self-explanatory again;  for example,\\
\verb!\if N5 \with x=b y=a z=c \ThenNot C2 \with x=0 y=0 z=1!\\
is one of the constraint lines when dealing with $\AS(\mvar H_6)$. 

The terminological difference between assignments and constraints is explained by their different roles in the program. 
The second file for $\AS(\mvar H_6)$ consists of 
seventeen assignment lines and nineteen constraint lines. 
The program takes all the $2^{17}-1$ subsets $I$ 
of the set of assignment lines one by one. For each $I$, the program  verifies whether all the constraints are satisfied. If they are, then the program constructs the subdirect product of
the subdirectly irreducible lattices belonging to $I$ and counts its atoms. On a desktop computer with AMD Ryzen 7 2700X Eight-Core Processor 3.70 GHz, it took hardly more than 5 minutes to obtain $\AS(\mvar H_6)$. 

The computations for the proper subvarieties of $\mvar H_6$ that are indicated by  vertices in Figure~\ref{figdd} took less than 
six seconds. We have outlined the proof of part \eqref{thmmaina} of Theorem~\ref{thmmain}, on which the computer spent about six minutes.

The proof of part  \eqref{thmmainb} is practically the same
; we only give the differences. The main difference is that we cannot use \eqref{eqdHtrsbmDnMsb}. Hence, for example, we cannot reduce what \eqref{eqtZrhNSzsrg} says about  $\mvar H_6$. This is why we could use our computer program only for the bottom elements of the intervals in the table that goes with part \eqref{thmmainb} of Theorem~\ref{thmmain}.
For comparison, note that while the program computed $\AS(\mvar L_3)$ in less than a second, it spent three and a half hours on computing $\DS(\mvar L_3)$.

Finally, as the last sentence of the proof we present here, we mention that the reader can access all details by downloading the computer program together with its input and output files from the author's website.
\end{proof}

\begin{remark}[on the program] There is an earlier program  developed by Berman and Wolk~\cite{bermanwolk} that is somehow related to ours. The two programs were written for different purposes in different programming languages for different computers in different times. Nevertheless, these two programs share some ideas. As mentioned already, 
Lemma~\ref{lemmacZTkglnTxp}, from which both programs benefit heavily, has been extracted from Berman and Wolk~\cite{bermanwolk}. 
This lemma is the only influence of Berman and Wolk's program on the present one; first because the programming language they use is not readable for me, second because the current program takes lots of
ideas and parts from my programs that go with Cz\'edli~\cite{czgpartlat} and Cz\'edli and Oluoch~\cite{czgLO}.

Both programs compute a subdirect product by generating the corresponding sublattice in a direct product. The core of both programs is to calculate the sublattice a given set $X_0$ generates. Even if there is a trivial algorithm, it has to be accelerated in two ways. First, instead of computing a sequence of subsets by the rule
\[X_{i+1}:=X_i\cup\set{x\vee y: (x,y)\in X_i^2}\cup\set{x\wedge y: (x,y)\in X_i^2}
\] 
as long as $X_i\supset X_{i-1}$ (proper inclusion), the program lets 
\begin{align}X_{i+1}:=X_i&\cup\set{x\vee y: (x,y)\in (X_{i}\setminus X_{i-1})\times X_i } \cr
&\cup \set{x\wedge y: (x,y)\in(X_{i}\setminus X_{i-1})\times X_i }. \label{eqlNgznMrMksz}
\end{align}
the improvement in speed is essential.
Second, even if the direct product of the subdirectly irreducible factors is very large and it cannot be stored (or not intended to be stored) by the program, the subdirect product what $X_0$ generates is much smaller in general and it should be stored. When a new element, say a new meet according to \eqref{eqlNgznMrMksz}, is computed, the program has to check whether it is already present in $X_i$ or it should really be added. 
Hence, according to a trivial algorithm, the program should check all elements of $X_i$ and compare them to the candidate new element; this needs $|X_i|$ steps for a new element and $|X_i|/2$ steps in average if the candidate element is not new. Even  $|X_i|/2$ is rather large since this activity has to be repeated very many times.  
To accelerate this trivial algorithm, Berman and Wolk~\cite{bermanwolk} used some hash function. Our program follows a different strategy: we store $X_i$ in a binary tree with the property that all elements of the left subtree of a node are lexicographically smaller while those of the right subtree are larger than the node in question. As a result, the program reduces the above-mentioned $|X_i|$ or $|X_i|/2$ steps down to $\log_2(|X_i|)$ steps. 
\end{remark}

\begin{remark}[on the reliability of the program]
Generally, a computer program is more difficult to check and it is more exposed to hidden errors than a mathematical proof. While  mathematical papers trust themselves, it is quite typical that a computer program declares itself by the words ``is as is''. 

Even if the  source file of the current program, called \texttt{atoms3}, is only 54 kilobytes while its auxiliary program, called \texttt{isitlatt}, is 26 kilobytes, and even if I have spent lots of time on testing the program as a whole and also its parts separately, the most satisfactory way of testing would be to use another program written by another person.
Fortunately, this has mostly been realized already in the following way. 
The critical part of the program is to compute subdirect products. When it is ready, then finding its atoms is easy (but not fast). Furthermore, the program can print the atoms and then the user can easily see that (in most cases) they are atoms even in the full direct product, so they are surely atoms in the subdirect product. So, it is only the generation of the subdirect product that mostly needs a real verification. When the program computes the double spectrum, the input file cannot rely on  \eqref{eqdHtrsbmDnMsb}. Then, as it is easy to see and it is pointed out in Berman and Wolk~\cite{bermanwolk}, the subdirect product of all assignments is the free lattice on three generators in the  variety we are dealing with.
 We needed to compute $\DS(\mvar L_1)$, $\DS(\mvar L_3)$, and $\DS(\mvar L_4)$ in the proof of Theorem~\ref{thmmain}\eqref{thmmainb}, and only the assignment for $M_3$ has to be removed from their input files to compute the corresponding free lattices. 
We did so and the program reported that the free lattices on three generators in the varieties $\mvar L_1$, $\mvar L_3$, and $\mvar L_4$ consist of
$178$, $2\,811$, and $821$ elements, respectively. The \emph{same numbers} have been given by Berman and Wolk~\cite[page 274]{bermanwolk}. 
Besides that the above-mentioned coincidence increases our trust in the program, it also verifies three of the input files. The rest of these files can be checked manually since they are readable text files and they are available from the author's website. 
\nothing{Furthermore, both the source file of the program and its input files are attached the arXiv version of the paper as appendices.}\nothing{xxx}
\end{remark}

\goodbreak

\section{How far can we go?}\label{sect:howfar}
We can only give a modest partial answer to the question above. We begin with an example.

\begin{example}[stories about the lattice $U_8$]\label{exmpl:18} 
While the largest number of atoms of a three-generated lattice in the scope of the previous section is at most $6$, we know practically nothing on this number in case of other lattices. This is why the subdirectly irreducible lattice $U_8$ in the middle of Figure~\ref{figaa} and the variety $\var U_8:=\HSP \set{U_8}$ are of some interest. With the list
\begin{verbatim}
\lattice=U8 \with x=a y=b z=c
\lattice=U8 \with x=a y=c z=b
\lattice=U8 \with x=b y=a z=c
\lattice=U8 \with x=b y=c z=a
\lattice=U8 \with x=c y=a z=b
\lattice=U8 \with x=c y=b z=a
\end{verbatim}
of assignments, quoted from the corresponding input file, the program computed the six-fold subdirect product of $U_8$. This subdirect product consists of $47\,092$ elements and it has 18 atoms; the computation took 27 minutes. After observing that $L_4\in \var U_8$ and adding three more assignments,
\begin{verbatim}
\lattice=L4 \with x=a y=b z=c
\lattice=L4 \with x=b y=c z=a
\lattice=L4 \with x=c y=b z=a
\end{verbatim}
to the first six, the nine-fold subdirect product consists of $61\,608$ elements but the number of atoms is still 18; the computation took  56 minutes.
Finally, it took six hours and fourteen minutes to compute the subdirect product for all subsets of these nine assignments. The information we obtained in this way is that
\begin{equation}
\set{ 1, 2, 3, 4, 5, 6, 8, 9, 12, 15, 18}\subseteq \AS(\var U_8)\subseteq \AS(\mvar U_8),
\label{eqdzhGrhcstrNdc}
\end{equation}
where $\mvar U_8=\var U_8\vee\mvar N_5$, as usual. It is straightforward to see that for every lattice $K$,
\begin{equation}
\text{if $K\in\HSs\set{U_8}$ and $|K|<|U_8|$, then $K\in\var L_3\vee \var L_4$.}
\label{eqtxtHzrGmgRmmlr}
\end{equation}
Hence, it follows easily from B. J\'onsson's results, see \eqref{pbxBJg} and \eqref{pbxBJktt}, that 
\begin{equation}
\text{$\mvar U_8$ and $\var U_8$ cover  $\mvar L_3\vee \mvar L_4\in\bhhrom$ and  $\var L_3\vee \var L_4$,}
\label{eqtxtZfhfDnKr}
\end{equation}
respectively, in the lattice $\alllat$ of all lattice varieties. 
We do not know if the first inclusion in \eqref{eqdzhGrhcstrNdc} is proper or not, and we only guess that the second one might be an equality. Together with the covering  $\mvar L_3\vee \mvar L_4\prec \mvar U_8$, this shows that  even a little step out of $\bhhrom$ can bring lots of changes and difficulties.

The subdirectly irreducible lattices $\chain 2$, $N_5$, and $L_3$
also belong to $\var U_8$. If we added  the corresponding assignments to the nine mentioned above and the possible constraints then, by a rough estimation, it would take  months or, rather, years  to compute $\AS(\var U_8)$. This is why  neither $\AS(\var U_8)$, nor $\AS(\mvar U_8)$ is  given in this paper. 

As opposed to $\AS(\mvar U_8)$, which we do not know, the results proved in this paper enable us to determine $\CS({\mvar U_8})$ as follows. 
Applying Lemma~\ref{keylemma} first with $(\var M, L_3)$ playing the role of $(\var V,K)$ and then $(\mvar L_3,L_5)$ playing the same role, we obtain that the free lattice on three generators in $\mvar L_3\vee \mvar L_5$ is finite. 
By \eqref{eqzMywRdlWtzls} and Theorem~\ref{thmmain}\eqref{thmmaina}, 
\begin{equation}
\AS(\mvar L_3)=\AS(\mvar L_3\vee \mvar L_5)=\set{1,2,3,4}. 
\label{eqdssjzRgrW}
\end{equation}
(The first equality above is only for a later reference.)
Since $L_3$ is a selfdual lattice and $L_5=\dual{L_4}$, the dual of \eqref{eqtxtHzrGmgRmmlr}
gives that all proper homomorphic images and all proper sublattices of $\dual{U_8}$ belong to $\mvar L_3\vee \mvar L_5$. Applying Lemma~\ref{keylemma}\eqref{keylemmaa} to the above-mentioned facts, we obtain that
$\AS(\dual{\mvar U_8})=\set{1,2,3,4}$. Therefore,
\begin{equation}
\CS({\mvar U_8})=\set{1,2,3,4}.
\label{eqhRtRjFtbS}
\end{equation}
\end{example}
 
\begin{remark} Assume that $\var X$ is a lattice variety such that $\mvar X\in\bhhrom$; see \eqref{pbxmskSgnKa} for the notation. No systematic study of the relationship between the spectra of $\var X$ and those of $\mvar X$ is targeted in the present paper; we only mention the following.
As opposed to what the results and examples formulated so far suggest, $\AS(\var X)$ and $\AS(\mvar X)$ can be distinct. For example, compare  \eqref{eqdssjzRgrW} with
\begin{equation}
\AS(\var L_3)=\AS(\var L_3\vee \var L_5)=\set{1,2,3},
\label{eqdsizRsprhRmhgrD}
\end{equation}
which was computed by our computer program in less than a second.
\end{remark}

\begin{remark} Similarly to the argument showing \eqref{eqhRtRjFtbS}, 
the following counterpart of \eqref{eqhRtRjFtbS} follows from  \eqref{eqdsizRsprhRmhgrD}, the dual of \eqref{eqtxtHzrGmgRmmlr}, 
and Lemma~\ref{keylemma}\eqref{keylemmaa}:
\begin{equation}
\CS({\var U_8})=  \AS(\HSP{\set{\dual{U_8}}}) =  \set{1,2,3}.
\label{eqhRgrZHsTzGr}
\end{equation}
\end{remark}

Theorem~\ref{thmmain} shows that for many nontrivial intervals $[\var V,\var W]$ of $\bhhrom$,  it may happen that  $\AS(\var V)=\AS(\var W)$. 
The length of such an interval $[\var V,\var W]$ is at most 22 (and it is  22 for  $[\var V,\var W]=[\mvar L_4, \var T_{23}]$). Using the method of the proof of Theorem~\ref{thmmain}, one can derive the following  consequence of (the Key) Lemma~\ref{keylemma} in a straightforward way.

\begin{remark}\label{remarkHqrhhqxG} If $\var V$ is a lattice variety such that $\afree{\var V}3$ is finite and $k$ is a positive integer, then there exists a lattice variety 
$\var W$ such that $\var V\leq \var W$ in $\alllat$, the interval $[\var V,\var W]$ is of length $k$, and $\AS(\var W)=\AS(\var V)$. 
\end{remark}

No survey of the triples $(\var V,\var W,k)$ with components taken from Remark~\ref{remarkHqrhhqxG} is targeted in the present paper, and  it is not clear whether a \emph{satisfactory} survey would be possible. We only mention that (the Key) Lemma~\ref{keylemma} is not sufficient to yield 
every possible triples. 
For example, if $\var V= \mvar N_5$ and $\var V<\var W\in [\mvar N_5,\mvar H_3]$, then Lemma~\ref{keylemma} is not applicable but we know from 
Theorem~\ref{thmmain} that $\AS(\var W)=\AS(\var V)$.
In addition to what (the Key) Lemma~\ref{keylemma} offers, there is another straightforward way to find an extension $\var W$ of $\var V$, not always a proper extension, such that $\AS(\var W)=\AS(\var V)$ simply because the class of three-generated lattices remains  unchanged. Namely, we can apply the following observation for lattices with $k=3$.

\begin{observation}\label{ObservLG} Let $\var V$ be a variety algebras and let $k$ be a positive integer. Then there exists a largest variety $\var W$ such that $\var V\subseteq \var W$ but every algebra of $\var W$ that can be generated by at most $k$ elements belongs to $\var V$.  
\end{observation}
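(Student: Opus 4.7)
The plan is to define $\var W$ explicitly by identities. Let $\Sigma$ be the set of all identities $p(x_1,\dots,x_n)\approx q(x_1,\dots,x_n)$ in at most $k$ variables that are valid in every algebra of $\var V$, and let $\var W$ be the variety axiomatized by $\Sigma$. Then $\var V\subseteq \var W$ trivially, since every identity in $\Sigma$ holds in $\var V$ by definition.

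Next, I would verify that every at most $k$-generated algebra $B\in\var W$ lies in $\var V$. Letting $F$ denote the $\var V$-free algebra on $k$ free generators $\bar x_1,\dots,\bar x_k$ and $T$ the absolutely free term algebra on $x_1,\dots,x_k$, one has canonical surjections $\alpha\colon T\to F$ sending $x_i\mapsto\bar x_i$ and $\beta\colon T\to B$ sending $x_i\mapsto b_i$, where $b_1,\dots,b_k$ is a generating tuple of $B$ (pad with repetitions if fewer than $k$ generators suffice). The kernel of $\alpha$ is exactly $\set{(p,q):p\approx q\in\Sigma}$, and since $B$ satisfies each identity in $\Sigma$, this kernel is contained in the kernel of $\beta$. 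Therefore $\beta$ factors through $\alpha$, exhibiting $B$ as a homomorphic image of $F\in\var V$, whence $B\in\var V$.

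For maximality, I would let $\var W'$ be an arbitrary variety with $\var V\subseteq \var W'$ such that every at most $k$-generated algebra of $\var W'$ belongs to $\var V$, and show $\var W'\subseteq \var W$. Given $A\in\var W'$, an identity $p\approx q$ in $\Sigma$ in variables $x_1,\dots,x_n$ with $n\leq k$, and arbitrary $a_1,\dots,a_n\in A$, the subalgebra $\langle a_1,\dots,a_n\rangle$ of $A$ is at most $k$-generated and lies in $\var W'$ by closure under $\Ss$, so it belongs to $\var V$ by hypothesis. Hence $p(a_1,\dots,a_n)=q(a_1,\dots,a_n)$ in this subalgebra, and therefore in $A$. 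Varying the identity and the tuple gives $A\in\var W$.

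There is no real obstacle; the argument is purely syntactic once one observes that the property ``every at most $k$-generated member belongs to $\var V$'' is axiomatized precisely by the identities of $\var V$ in at most $k$ variables. The only care needed is to invoke closure of $\var W'$ under subalgebras in the maximality step, so that an arbitrary $n$-tuple from $A\in\var W'$ can be localized inside an at most $k$-generated subalgebra and the hypothesis on $\var W'$ applied.
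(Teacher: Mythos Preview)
Your proof is correct and follows essentially the same idea as the paper: both define $\var W$ via the at most $k$-variable identities of $\var V$ and invoke Birkhoff's theorem. The paper reaches $\var W$ slightly more indirectly, by intersecting the identity sets of all varieties $\var U\supseteq\var V$ sharing the same $k$-variable theory as $\var V$, whereas you axiomatize $\var W$ directly by $\Sigma$ and verify the required properties using the free algebra and closure under subalgebras; the content is the same.
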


To give an example,  if $V$ is the lattice variety $\var M_3=\HSP\set{M_3}$ and $k=3$, then a straightforward (but omitted) argument shows that $\var W$ above is the variety $\var M$ of modular lattices; in this case the interval $[\var V,\var W]$ is not of finite length and its cardinality is continuum.

\begin{proof}[Proof of Observation~\ref{ObservLG}] Based on Birkhoff's classical characterization of equational classes as varieties, see Birkhoff~\cite{birkhoffHSP}, the proof is almost trivial. For a set $\Sigma$ of identities, let $\Sigma(k)$ denote the set of at most $k$-variable identities belonging to $\Sigma$.  For a variety $\var X$, let $\Sigma_{\var X}$ be the set of all identities that hold in $\var X$. Let $\boldsymbol G$ be the set of all varieties $\var U$ such that $\var U$ is of the same signature as $\var V$, $\var V\subseteq \var U$, and $\Sigma_{\var U}(k)=\Sigma_{\var V}(k)$. Then
$\Gamma:=\bigcap\set{ \Sigma_{\var U}: \var U\in  \boldsymbol G}$
is closed with respect to the inference rules since so are all the $\Sigma_{\var U}$,  $\var U\in  \boldsymbol G$. Hence, by Birkhoff's theorem, the models of $\Gamma$ form a variety $\var W$ and $\Sigma_{\var W}=\Gamma$. Since $\Gamma(k)=\bigcap\set{\Sigma_{\var U}(k): \var U\in \boldsymbol G}=\set{\Sigma_{\var V}(k): \var U\in \boldsymbol G}
=\Sigma_{\var V}(k)$, we have that $\Sigma_{\var W}(k)=\Gamma(k)=\Sigma_{\var V}(k)$. Hence, 
$\var W\in \boldsymbol G$. Since $\Gamma=\Sigma_{\var W}$ is the smallest element of $\set{\Sigma_{\var U}: \var U\in \boldsymbol G}$, we obtain that $\var W$ is the largest member of $\boldsymbol G$, as required. 
\end{proof}

In order to prove part \eqref{thmmaina}, it suffices to prove that 
\begin{equation}\left\{\,\,
\parbox{8cm}{the atom spectra of the eight members of 
the yellow interval $[\mvar N_5, \mvar H_6]$ that are indicated by vertices in Figure~\ref{figdd} are the same as stated in the theorem.}\right.
\label{pbxbNlCsKrldFzZGh}
\end{equation}
Indeed, if \eqref{pbxbNlCsKrldFzZGh} held, then \eqref{eqknXmpHle} and \eqref{eqVlzrbTssGma} would yield the validity of part \eqref{thmmaina} for all the 32 varieties belonging to the yellow interval $[\mvar N_5, \mvar H_6]$. Thus, 
it would follow from
\eqref{eq:zlnkgchmZbrJng} and 
 and Proposition~\ref{propgMncmp}\eqref{propgMncmpe} 
that any two varieties belonging to the same 
layer have the same atom spectrum, whereby \eqref{pbxbNlCsKrldFzZGh} would  imply the validity of part \eqref{thmmaina}.

\leftline{\hfill \includegraphics[width=\textwidth]{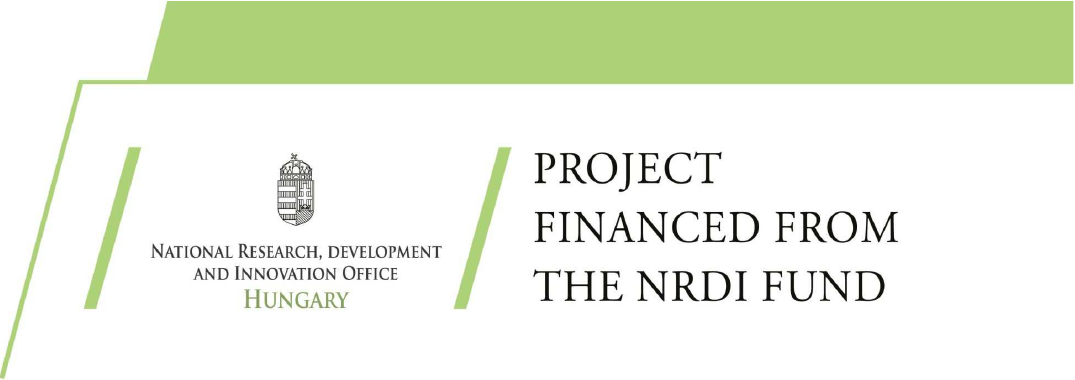}}
%\leftline{\hfill \includegraphics[scale=0.65]{logo}}

\end{document}